\definecolor{ogreen}{RGB}{85,107,47} 
\definecolor{dred}{RGB}{139,0,0}
\def\foo{1}
\newcommand{\re}{\mathbb{R}}
\newcommand{\R}{\mathbb{R}}
\newcommand{\Z}{\mathbb{Z}}
\newcommand{\N}{\mathbb{N}}
\newcommand{\diag}{\mbox{diag}}
\newcommand{\half}{\frac{1}{2}}
\newcommand{\nn}{\nonumber}
\newcommand{\eps}{\epsilon}
\newcommand{\dt}{\delta}
\def\af{\alpha}
\def\bt{\beta}
\def\la{\lambda}
\def\gm{\gamma}
\def\Gm{\Gamma}
\newcommand{\sig}{\sigma}
\newcommand{\reff}[1]{(\ref{#1})}
\newcommand{\mbS}{\mathcal{S}}
\newcommand{\mc}[1]{\mathcal{#1}}
\DeclareMathOperator{\QM}{QM}
\newcommand{\x}{{\tt x}}
\newcommand{\w}{{\tt w}}
\newcommand{\y}{{\tt y}}
\newcommand{\z}{{\tt z}}
\newcommand\cx{[\x]}
\newcommand\rx{\R\cx}
\newcommand{\bdes}{\begin{description}}
\newcommand{\edes}{\end{description}}
\newcommand{\bal}{\begin{align}}
\newcommand{\eal}{\end{align}}
\newcommand{\bnum}{\begin{enumerate}}
\newcommand{\enum}{\end{enumerate}}
\newcommand{\bit}{\begin{itemize}}
\newcommand{\eit}{\end{itemize}}
\newcommand{\bea}{\begin{eqnarray}}
\newcommand{\eea}{\end{eqnarray}}
\newcommand{\be}{\begin{equation}}
\newcommand{\ee}{\end{equation}}
\newcommand{\baray}{\begin{array}}
\newcommand{\earay}{\end{array}}
\newcommand{\bsry}{\begin{subarray}}
\newcommand{\esry}{\end{subarray}}
\newcommand{\bca}{\begin{cases}}
\newcommand{\eca}{\end{cases}}
\newcommand{\bcen}{\begin{center}}
\newcommand{\ecen}{\end{center}}
\newcommand{\bbm}{\begin{bmatrix}}
\newcommand{\ebm}{\end{bmatrix}}
\newcommand{\bmx}{\begin{matrix}}
\newcommand{\emx}{\end{matrix}}
\newcommand{\bpm}{\begin{pmatrix}}
\newcommand{\epm}{\end{pmatrix}}
\newcommand{\btab}{\begin{tabular}}
\newcommand{\etab}{\end{tabular}}
\newtheorem{theorem}{Theorem}[section]
\newtheorem{prop}[theorem]{Proposition}
\newtheorem{cor}[theorem]{Corollary}
\newtheorem{corollary}[theorem]{Corollary}
\theoremstyle{definition}
\newtheorem{exm}[theorem]{Example}
\newtheorem{ques}[theorem]{Question}
\numberwithin{equation}{section}
\begin{document}

\title[A Matrix Positivstellensatz with lifting polynomials]
{A Matrix Positivstellensatz with\\ lifting polynomials} 

\author{Igor Klep}
\address{Igor Klep, Department of Mathematics,
The University of Auckland, New Zealand}
\email{igor.klep@auckland.ac.nz}

\author{Jiawang Nie}
\address{Jiawang Nie, Department of Mathematics,
University of California San Diego,
9500 Gilman Drive, La Jolla, CA, USA, 92093.}
\email{njw@math.ucsd.edu}

\subjclass[2010]{14P10, 90C22, 13J30, 52A20}

\keywords{Positivstellensatz, spectrahedrop,
matrix polynomial, containment, semidefinite program}
%
%

\begin{abstract}
Given the projections of two semialgebraic sets defined by polynomial matrix inequalities,
it is in general difficult to determine whether one
is contained in the other.
To address this issue
we propose a new matrix Positivstellensatz
that uses lifting polynomials.
Under the classical archimedean condition
and some mild natural assumptions, we prove that
such a containment holds if and only if
the proposed matrix Positivstellensatz 
is satisfied. The corresponding certificate
can be searched for by solving a semidefinite program.
An important application 
is to
certify when a spectrahedrop (i.e., the projection of a spectrahedron)
is contained in another one.
\end{abstract}

\maketitle

\section{Introduction}

A basic question of fundamental importance in convex geometry and
optimization is to determine
whether or not containment holds between two given convex sets.
The simplest convex sets are polyhedra, defined by a finite set
of scalar linear inequalities.
Containment problems for polyhedra have been studied extensively  and are
well understood \cite{FO,GK}.
%
%
Another class of thoroughly studied convex sets are spectrahedra.
They arise as feasible sets of semidefinite programs \cite{deK,SDPbk}
and are defined by linear matrix inequalities (LMIs).
Denote by $\mc{S}^k$ the space of all $k\times k$ real symmetric matrices.
A tuple
$
A := (A_0, A_1, \ldots, A_n) \in (\mc{S}^k)^{n+1}
$
gives rise to the linear pencil
\[
A(\x) := A_0 + \x_1 A_1 + \cdots + \x_n A_n,
\]
in the variable $\x:=(\x_1, \ldots, \x_n)$.
It determines the \emph{spectrahedron}
(i.e., a set that is defined by a linear matrix inequality)
\[
S_A := \{ x \in \re^n: \, A(x) \succeq 0 \}.
\]
(Here, $C\succeq0$ means
the matrix $C$ is positive semidefinite.
Similarly, we use $C\succ0$ to express 
that  $C$ is positive definite.)

Another important containment question is the matrix cube problem
by Ben-Tal \& Nemirovski~\cite{BTN,Nem}.
It asks for the largest hypercube contained
in a given spectrahedron. The problem is known to be NP-hard.
Numerous problems of robust control,
such as Lyapunov stability analysis for uncertain dynamical systems,
are special cases of the matrix cube problem.
This is also the case for maximizing a positive definite quadratic form
over the unit cube, one of the fundamental problems in combinatorial optimization.

More generally, given another tuple
\(
B := (B_0, B_1, \ldots, B_n) \in (\mc{S}^t)^{n+1},
\)
where $t$ might be different from $k$,
one is interested in a certificate for the containment
\be\label{eq:S_A in S_B}
S_A \subseteq S_B.
\ee
Clearly, if there exist matrices
$V_i$ ($i = 0, \ldots, \ell)$ such that
\be \label{B=VAV+W}
B(\x) =
V_0^TV_0
+
\sum_{i=1}^\ell  V_i^T A(\x) V_i,
\ee
then $S_A \subseteq S_B$. If $S_A$ has nonempty interior
(this is the case if $A_0=I_d$,
the $d \times d$ identity matrix), then
\reff{B=VAV+W} holds if and only if the matricial relaxation of
$S_A$ is contained in the matricial relaxation of
$S_B$~\cite{HKM12,HKM13}.
When $B(\x)$ is the normal form of an ellipsoid or polytope,
the certificate \reff{B=VAV+W} is necessary and sufficient
for $S_A \subseteq S_B$, as shown by Kellner, Theobald and Trabandt \cite{KTT13}.
%
%
More general spectrahedral containment
is also addressed by the same authors in \cite{KTT15}.

In general, the certificate \reff{B=VAV+W} is
sufficient but not necessary for ensuring $S_A \subseteq S_B$.
A more general certificate than \reff{B=VAV+W} is
\be \label{B(x)=VA(x)V+W(x)}
B(\x) = V_0(\x)^TV_0(\x)+ \sum_{i=1}^\ell  V_i(\x)^T A(\x) V_i(\x) ,
\ee
for matrix polynomials $V_0(\x), V_1(\x), \ldots, V_\ell(\x)$.
To guarantee \reff{B(x)=VA(x)V+W(x)},
we typically need that $S_A$ is bounded
and $B(\x) \succ 0$  on $S_A$.
The boundedness of $S_A$ is equivalent to archimedeanness
of the quadratic module associated to the linear pencil $A(\x)$;
see \cite{KSmor}.
Hence if $S_A$ is bounded and $B(x) \succ 0$ on $S_A$,
then $B(x)$ can be expressed as in \reff{B(x)=VA(x)V+W(x)}.
This is a consequence of the classical matrix
Positivstellensatz~\cite{HN,KleSch10,SchHol06}.
It can be used to check containment of spectrahedra \cite{KTT15}.

However, in applications, convex sets are often not spectrahedra.
A much more general class of convex sets are projections of spectrahedra,
which we call {\it spectrahedrops}.
The Lasserre type moment relaxations~\cite{LasSDr,Las,NPS}
produce a nested hierarchy of spectrahedrops approximating
and closing down on the (convex hull of a) semialgebraic set.
Many convex semialgebraic sets
are spectrahedrops \cite{HN09,HN,Sce1};
however, not all of them are \cite{Sce2}.

Consider the linear pencils
($\y:=(\y_1,\ldots, \y_r)$,
$\z:=(\z_1,\ldots,\z_s)$)
\be
\label{eq:A B}
\left\{ \baray{rcl}
A(\x, \y) &:=& A_0 + \x_1 A_1 + \cdots + \x_n A_n + \y_1 A_{n+1} + \cdots + \y_r A_{n+r}, \\[1mm]
B(\x,\z)  &:=& B_0 + \x_1 B_1  + \cdots + \x_n B_n + \z_1 B_{n+1} + \cdots + \z_s B_{n+s},
\earay \right.
\ee
where $A_i, B_i$ are all real symmetric matrices.
They define the spectrahedrops
\be\label{eq:PA PB}
\left\{ \baray{rcl}
P_A &:=& \{x\in\R^n:  \exists y\in\R^r ,\, A(x, y)  \succeq 0  \}, \\[1mm]
P_B &:=& \{x\in\R^n: \exists z\in\R^s ,\, B(x,z) \succeq 0 \}.
\earay\right.
\ee
A natural question is: how can we check the containment
\be\label{eq:K_A in K_B}
P_A   \subseteq  P_B ?
\ee
If $P_A \subseteq P_B$, then for all $x\in P_A$
there exists $z\in \re^s$ such that $B(x,z) \succeq 0$.
When there are no lifting variables $\y,\z$,
we have
$P_A=S_A$ and $P_B=S_B$, so
the containment
\eqref{eq:K_A in K_B}
simply reduces to \eqref{eq:S_A in S_B}
and can be certified by \reff{B=VAV+W} or \eqref{B(x)=VA(x)V+W(x)}.
However, when there are lifting variables $y,z$,
\reff{B=VAV+W} and \eqref{B(x)=VA(x)V+W(x)} do not apply,
because the ranges of $y,z$ depend on $x$.
While a Positivstellensatz describing polynomials positive
on spectrahedrops is given in \cite{GN},
to the best of the authors' knowledge,
the question of a satisfactory
certificate for \eqref{eq:K_A in K_B}
is widely open. 

\subsection*{Contributions}

In this paper, we study how to check the containment 
between projections of two semialgebraic sets
that are given by polynomial matrix inequalities.
By Tarski's transfer principle \cite{BCR},
the projection of a semialgebraic set
is again semialgebraic. However, it is generally
a challenge
to find a concrete description for the projection.
For computational efficiency  we usually need to work directly with
the original semialgebraic descriptions, including the extra variables.
For this purpose, we propose a new matrix Positivstellensatz
that uses {\it lifting polynomials},
which we call a {\it lifted matrix Positivstellensatz}.

Denote by $\mc{S}\re[\x,\y]^{k\times k}$ the space of all real
$k\times k$ symmetric matrix polynomials in
$\x := (\x_1,\ldots, \x_n)$ and $\y :=(\y_1,\ldots,\y_r)$.
The space $\mc{S}\re[\x,\z]^{t\times t}$
is similarly defined, with $\z := (\z_1,\ldots, \z_s)$
and an integer $t>0$.
For $G(\x,\y) \in \mc{S}\re[\x,\y]^{k\times k}$
and $Q(\x,\z) \in \mc{S}\re[\x,\z]^{t\times t}$,
consider the projections of semialgebraic sets defined by them,
\[
\baray{rcl}
P_G &:=& \{x \in \re^n: \, \exists y\in\R^r,\, G(x,y)  \succeq 0  \}, \\
P_Q &:=& \{x \in \re^n: \, \exists z\in\R^s, \,  Q(x,z)  \succeq 0  \}.
\earay
\]
We are interested in a certificate for the containment
\be\label{eq:PG in PQ}
P_G \, \subseteq \, P_Q .
\ee
This task is typically very hard.
For a given $x$, checking the existence of $z$
satisfying $Q(x,z) \succeq 0$ is already very difficult,
as it amounts to verifying whether a polynomial matrix inequality
has a real solution or not.
However, we can easily see that $P_G \subseteq P_Q$ if
there exist polynomials $p_1(\x), \ldots, p_s(\x) \in \re[\x]$ such that
\be \label{P(x):satz:y=p}
\left\{ \baray{c}
Q(\x, \underbrace{ ( p_1(\x), \ldots, p_s(\x) )  }_{\z}  \, ) = \\
 V_0(\x,\y)^TV_0(\x,\y)+{\sum}_{i=1}^\ell  V_i(\x,\y)^T G(\x,\y) V_i(\x,\y) ,
\earay \right.
\ee
for certain matrix polynomials $V_0(\x,\y), \ldots, V_\ell(\x,\y)$.
This is because for every $x$, if there exists $y$ such that
$G(x,y) \succeq 0$ (i.e., $x\in P_G$), then $Q(x,z) \succeq 0$
for $z=(p_1(x), \ldots, p_s(x))$ (i.e., $x\in P_Q$).
The representation \reff{P(x):satz:y=p}
gives a certificate for $P_G \subseteq P_Q$.
When $Q(\x,\z)$ does not depend on $\z$,
\reff{P(x):satz:y=p} is reduced to the classical
matrix Positivstellensatz \cite{KleSch10,SchHol06}.
We call each $p_i$ a {\it lifting polynomial}
and call \reff{P(x):satz:y=p} a
{\it lifted matrix Positivstellensatz certificate}.

When do there exist polynomials $p_1, \ldots, p_s \in \R[\x]$
satisfying \reff{P(x):satz:y=p}?
Is \reff{P(x):satz:y=p} also necessary for $P_G \subseteq P_Q$?
If they do exist, how can one compute $p_i(\x)$ and $V_i(\x,\y)$
satisfying \reff{P(x):satz:y=p}? In this paper, we assume that
the quadratic module generated by $G(\x,\y)$
is archimedean,
which is almost equivalent to the compactness of
the semialgebraic set
$S_G := \{ (x,y)\in\R^n\times \R^r\colon G(x,y)\succeq0\}$ and implies the compactness of
the projection
$P_G$.
The archimedeanness is typically required in a Positivstellensatz.
Our major results are:

\bnum[label={\rm(\Roman*)}]
\item\label{it:I}  When $Q(\x,\z)$ is linear in $\z$,
we show that \reff{P(x):satz:y=p} is also a necessary certificate for
$P_G \subseteq P_Q$, under the following natural condition:
for each $x \in P_G$ there exists $z$ such that $Q(x,z) \succ 0$.
The condition essentially means that
$P_G \subseteq \mbox{int}(P_Q)$, the interior of $P_Q$.
Such a condition is generally required. For instance,
when $Q(\x,\z)$ does not depend on $\z$,
the positivity of $Q(\x)$ on $P_G$ is 
 required
in the classical matrix Positivstellensatz.
The certificate \reff{P(x):satz:y=p}
can be searched for by solving a semidefinite program,
once the degrees for $p_i, V_j$ are fixed.
This result is given in Theorem~\ref{thm:dropSatz}
in  Subsection~\ref{ssec:3.1}.

\item  When $Q(\x,\z)$ is nonlinear in $\z$,
checking $P_G \subseteq P_Q$ becomes more difficult.
For this case, \reff{P(x):satz:y=p} gives nonlinear equations
for the coefficients of the unknown polynomials $p_i(\x)$,
i.e., \reff{P(x):satz:y=p} is not a convex condition
on the $(p_1, \ldots, p_m)$. Hence,
\reff{P(x):satz:y=p} cannot be checked
by solving a semidefinite program.
This is not surprising, because for a given $x$,
checking the existence of a $z$ satisfying $Q(x,z) \succeq 0$
is already a  difficult problem.

In computation, one often prefers a Positivstellensatz certificate
that can be checked by solving a semidefinite program.
We show that this is possible when
for each fixed $x \in P_G$, the matrix polynomial
$Q(x,\z)$ is sos-concave in $\z$.
Indeed, under the sos-concavity condition,
we prove that \reff{P(x):satz:y=p}
is equivalent to a different Positivstellensatz certificate
using lifting polynomials,
which can again be searched for by solving a semidefinite program.
Under the same condition as in \ref{it:I},
we prove a new lifted matrix Positivstellensatz.
This result is given in Theorem~\ref{thm:Pos:Qsoscvz}
in Subsection~\ref{ssec:3.2}.

A key step in the proofs of the above theorems is the existence of a 
continuous lifting
map $P_G\to S_Q$, where some type of convexity assumption is essential, see
Example \ref{ex:nolift}. Thus
 when $Q(\x,\z)$ is not convex in $\z$, the lifting polynomials might not exist. Hence
  Theorems \ref{thm:dropSatz} and \ref{thm:Pos:Qsoscvz} cannot be extended to the non-convex case.

\item The above lifted matrix Positivstellens\"atze
can be applied to check containment between two spectrahedrops.
Let $P_A, P_B$ be two spectrahedrops as in \eqref{eq:PA PB}.
A certificate for the containment ${P}_A \subseteq P_B$
is the representation
\be \label{B(x,y):Postz:A(x,z)}
\left\{ \baray{c}
B(\x, \underbrace{ ( p_1(\x), \ldots, p_s(\x) )  }_{\z}  \,) =  \\
V_0(\x,\y)^TV_0(\x,\y)+
\sum_{i=1}^\ell  V_i(\x,\y)^T A(\x,\y) V_i(\x,\y)

\earay \right.
\ee
where $p_1, \ldots, p_s$ are scalar polynomials in $\x$
and $V_0, \ldots, V_\ell$ are matrix polynomials in $(\x,\y)$.
%
%
In Section \ref{sc:conDrop}, we show in Theorem \ref{thm:dropDrop}
that \reff{B(x,y):Postz:A(x,z)}
is also a necessary certificate for $P_A \subseteq P_B$,
under weaker assumptions than in \ref{it:I}.
Indeed, the archimedeanness of the quadratic module of $A(\x,\y)$
can be weakened to the archimedeanness
of its intersection with the ring $\R[\x]^{t\times t}$.
%
%

\enum

The paper is organized as follows.
Section~\ref{sc:pre} gives  preliminaries
about matrix polynomials and their quadratic modules.
Section~\ref{sc:LMPosz} presents two lifted matrix Positivstellens\"{a}tze,
gives their proofs and several examples.
Section~\ref{sc:conDrop} shows how to apply
the lifted matrix Positivstellensatz to check
containment of spectrahedrops and discusses
the matrix cube problem for spectrahedrops.
Finally, Section~\ref{sc:con} gives conclusions
and discusses some open questions.

\section{Preliminaries}
\label{sc:pre}

This section reviews some preliminary results
about matrix polynomials and
the classical matrix Positivstellensatz.

\subsection{Notation}

\def\cA{\R\cx^{t\times t}}
\def\rrxt{\R\cx^{t\times t}}
\def\srxt{\mbS_t(\R\cx)}
\emph{Matrix polynomials} are elements of the ring
$\R\cx^{k\times k}$ where $\R\cx$ is the ring of polynomials in
$\x :=(\x_1,\ldots,\x_n)$ with coefficients from $\R$.
The space of all $k\times k$ real symmetric matrix polynomials
is denoted as $\mc{S}\R[\x]^{k\times k}$.
Let $I_k$ denote the $k\times k$ identity matrix.
A subset $M \subseteq \mc{S}\R[\x]^{k\times k}$
is called a \emph{quadratic module} if
\[
I_k\in M,\quad M+M\subseteq M\quad\text{and}\quad
a^TMa\subseteq M\text{\, for all } \, a\in \R[\x]^{k\times k}.
\]
Here, the superscript $^T$ denotes the transpose of a matrix.
For a finite set $\Gm\subseteq \mc{S}\R[\x]^{k\times k}$,
define the semialgebraic set
\[
S_\Gm:=\{x\in\R^n \colon \forall g\in \Gm,\, g(x)\succeq 0\}.
\]
The set $\Gm$ generates the following quadratic module in
$\mc{S}\R[\x]^{t\times t}$,
\be \label{df:Qmod:G}
\QM_t(\Gm) \, := \,
\left\{\sum_{i=1}^L p_i^Tg_ip_i
\left|\baray{l}
g_i\in\{I_k\}\cup \Gm, \\
L\in\N,\, p_i\in \R[\x]^{k\times t}
\earay\right.
\right\}.
\ee
In particular, when $\Gm$ is empty,
$\QM_t(\emptyset)$ is the set of all sums of hermitian squares in
$\mc{S}\R[\x]^{t\times t}$, i.e., the \emph{sos matrix polynomials}.
Given a matrix polynomial
$f\in \mc{S}\R[\x]^{t\times t}$ and $S\subseteq\R^n$,
we write $f\succeq 0$ on $S$ if
for all $x\in S$, $f(x)\succeq 0$
(i.e., $f(x)$ is positive semidefinite).
Similarly, by writing $f\succ 0$ on $S$ we mean that
$f(x)\succ 0$, i.e., $f(x)$ is positive definite
for all $x \in S$.
%
%
Clearly, if $f\in \QM_t(\Gm)$ then $f\succeq0$ on $S_\Gm$.
Note that the finite set $\Gm$ can be
replaced by a block-diagonal matrix polynomial.
Thus there is no harm in assuming that $\Gm=\{G\}$.
In this case we shall write simply $S_G$ and $\QM_t(G)$
for the semialgebraic set and quadratic module generated by $S$, respectively.

In a Positivstellensatz, we usually deal with the case that $S_G$ is compact.
In fact, we often need a slightly stronger assumption
that the quadratic module $\QM_t(G)$ is archimedean.
Here, a quadratic module $M$ of $\mc{S}\R[\x]^{t\times t}$
is said to be \textit{archimedean} if
%
there exists $f\in M$ such that the set $S_f$ is compact.
When $S_G$ is bounded, the archimedeanness
can be enforced by possibly enlarging
$G$ without changing $S_G$.

\subsection{Matrix Positivstellensatz}

For a matrix polynomial $G \in \mc{S}\R[\x]^{k\times k}$,
if $f \in \mc{S}\R[\x]^{t\times t}$ and
$f \succeq 0$ on $S_G$, we might not have $f \in \QM_t(G)$.
To guarantee $f \in \QM_t(G)$,
we typically need that $\QM_t(G)$ is archimedean
(and thus $S_G$  compact) and $f \succ 0$ on $S_G$.
This is the matrix version of Putinar's Positivstellensatz~\cite{Put},
which is given by Scherer \& Hol \cite{SchHol06}.

\begin{theorem}
[\cite{SchHol06}]\label{thm:pos}
Let $G \in \mc{S}\R[\x]^{k\times k}$ be such that
$\QM_t(G)$ is archimedean. For $f \in \mc{S}\R[\x]^{t\times t}$,
if $f\succ 0$ on $S_G$, then $f\in\QM_t(G)$.
\end{theorem}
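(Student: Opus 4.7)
The plan is to adapt the classical GNS-based proof of Putinar's scalar Positivstellensatz to the matricial setting. Suppose for contradiction that $f\notin \QM_t(G)$. The archimedean hypothesis makes $I_t$ an order unit for $\QM_t(G)$: for every $h\in\mc{S}\R[\x]^{t\times t}$ some $N\in\N$ satisfies $NI_t\pm h\in \QM_t(G)$. With respect to the order-unit seminorm induced by $I_t$ the cone $\QM_t(G)$ has nonempty interior, so a Hahn--Banach separation produces a linear functional $L\colon\mc{S}\R[\x]^{t\times t}\to\R$ with $L\geq 0$ on $\QM_t(G)$, $L(I_t)=1$, and $L(f)\leq 0$.

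The core step is a matricial GNS construction that extracts from $L$ a positive $\mc{S}^t$-valued representing measure on $S_G$. On $V:=\R[\x]^{k\times t}$ define the symmetric bilinear form $\langle p,q\rangle:=\tfrac12 L(p^Tq+q^Tp)$. Positivity of $L$ on $\{p^Tp\colon p\in V\}\subseteq \QM_t(G)$ ensures this form is positive semidefinite; I would quotient by its kernel and complete to obtain a real Hilbert space $\mathcal H$. Multiplication by each coordinate $\x_i$ descends to a densely defined symmetric operator $X_i$ on $\mathcal H$, and the archimedean condition, applied through the module structure, yields a uniform operator bound, so the $X_i$ extend to bounded, pairwise commuting, self-adjoint operators. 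The joint spectral theorem then gives a projection-valued measure supported on a compact subset of $\R^n$; composing with the distinguished vectors coming from the columns of $I_t\in V$ produces a positive $\mc{S}^t$-valued Borel measure $\mu$ on $\R^n$ with $L(M)=\int \text{tr}(M(x)\,d\mu(x))$ for every $M\in \mc{S}\R[\x]^{t\times t}$.

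To localize the support, observe that $a^TGa\in \QM_t(G)$ for every $a\in\R[\x]^{k\times t}$, so $\int \text{tr}(a(x)^T G(x) a(x)\,d\mu(x))\geq 0$; varying $a$ forces $G(x)\succeq 0$ $\mu$-a.e., i.e., $\mathrm{supp}(\mu)\subseteq S_G$. Since $f\succ 0$ on the compact set $S_G$, a standard compactness argument gives $\veps>0$ with $f\succeq \veps I_t$ on $S_G$, whence $L(f)=\int \text{tr}(f(x)\,d\mu(x))\geq \veps\, L(I_t)=\veps>0$, contradicting $L(f)\leq 0$. The principal obstacle lies in the matricial GNS step: producing a genuinely $\mc{S}^t$-valued representing measure (not merely its scalar trace) and linking the matrix-module structure of $\QM_t(G)$ correctly to the operator bounds, so that archimedeanness yields bounded self-adjoint operators on the full matricial Hilbert space and a matrix-positive measure compatible with the generator $G$.
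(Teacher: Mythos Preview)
The paper does not supply its own proof of this theorem; it is quoted as a preliminary result from \cite{SchHol06} (with the refinement \cite{KleSch10} also cited). There is therefore nothing in the paper to compare against at the level of argument.

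Your strategy---separate $f$ from $\QM_t(G)$ by Hahn--Banach using $I_t$ as an archimedean order unit, then run a matricial GNS construction to produce a positive $\mc{S}^t$-valued representing measure supported in $S_G$---is a valid route and is essentially the abstract Kadison--Dubois\,/\,pure-state approach developed in \cite{KleSch10}, rather than Scherer and Hol's original argument, which goes through matrix sum-of-squares relaxations and moment-matrix duality. Both approaches work; yours is conceptually cleaner and generalizes well, while the Scherer--Hol proof is closer to the SDP hierarchy and more constructive.

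One technical point deserves tightening. With $V=\R[\x]^{k\times t}$, boundedness of the multiplication operator $X_i$ amounts to showing $L\big((N-\x_i^2)\,p^Tp\big)\geq 0$, i.e., $(N-\x_i^2)\,p^Tp\in\QM_t(G)$ for every $p\in V$. This does \emph{not} follow directly from $(N-\x_i^2)I_t\in\QM_t(G)$ and the module axiom $a^T\QM_t(G)a\subseteq\QM_t(G)$, since $p$ has the wrong shape to sandwich a $t\times t$ element. The fix is to use the size-independence of archimedeanness (archimedeanness of $\QM_t(G)$ for one $t$ implies it for all, in particular for $t=k$), obtain $(N-\x_i^2)I_k\in\QM_k(G)$, and then write $(N-\x_i^2)\,p^Tp=p^T\big((N-\x_i^2)I_k\big)p$, which now lands in $\QM_t(G)$ term by term. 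You correctly flag the module-structure/operator-bound step as the principal obstacle; making the passage through $\QM_k(G)$ explicit closes it.
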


We refer readers to \cite{HN,KleSch10}
for further refinements of this result, and to
\cite{Cim,HL,Schm} for additional recent results on
Positivstellens\"atze for matrix polynomials.

%
%
A matrix polynomial $Q\in \mc{S}\R[\x]^{t\times t}$
is sos if and only if the scalar polynomial
$\y^T Q(\x)\y$ is sos in $(\x,\y)$, where $\y$ is a new
$t$-tuple of variables. This means that sos matrix polynomials
can be checked by solving a semidefinite program.
A more direct procedure (see \cite[Lemma 1]{SchHol06}) is as follows.
When $Q$ has degree $2d$, $Q$ is sos if and only if there exists
a positive semidefinite matrix $Z$ such that
\be\label{eq:SDP}
Q= (u(\x)\otimes I_t)^T Z (u(\x)\otimes I_t),
\ee
where $\otimes$ is the classical Kronecker product and
$u(\x)$ is the vector of all monomials in $\x$  of degrees $\leq d$.
As \eqref{eq:SDP} is just a set of linear equations
in the entries of a positive semidefinite matrix $Z$,
one can search for a feasible $Z$ by solving a semidefinite program.
More generally, for a given finite set
$\Gm\subseteq \mc{S}\R[\x]^{k\times k}$,
one can check whether or not $Q$ belongs to the
\emph{truncated} quadratic module
\be \label{trun:QM(G)}
\QM_t(\Gm)\big|_{2d} \, := \,
\left\{\sum_{i=1}^L p_i^Tg_ip_i
\left|\baray{l}
g_i\in\{I_k\}\cup \Gm,\,p_i\in \R[\x]^{k\times t}, \\
L\in\N,\,\deg(p_i^Tg_ip_i) \leq 2d
\earay\right.
\right\}.
\ee
This can be done similarly by solving a semidefinite program
\cite[Section 5]{SchHol06}.
For more about the area, we refer to
positive polynomials \cite{HG,RT,Sch09},
moment problems \cite{LasBok,Laurent,PV},
convex algebraic geometry \cite{BPR13,FNT,GPT,GT},
polynomial optimization \cite{deKL,HL,Las01,Las,Laurent2,PS03,Scw},
and semidefinite programs \cite{deK,HNS,SDPbk}.

\section{A Lifted Matrix Positivstellensatz}
\label{sc:LMPosz}

In this section, we prove a lifted matrix Positivstellensatz
certifying containment of projections of semialgebraic sets
given by polynomial matrix inequalities.
For $G \in \mc{S}\R[\x,\y]^{k\times k}$
and $Q \in \mc{S}\R[x,y]^{t\times t}$,
consider the projected semialgebraic sets
\begin{align} 
\label{K:G(x)>=0}
P_G  &:= \{x \in \re^n: \exists y\in\R^r,\, G(x,y)  \succeq 0  \}, \\
\label{set:T}
P_Q &:= \{x \in \re^n: \exists z\in\R^s,\,  \, Q(x,z) \succeq 0 \}.
\end{align}
We are going to establish a certificate
for the containment $P_G \subseteq P_Q$.
Our discussion is divided into two cases.
We first analyze the case when
$Q(\x,\z)$ is linear in $\z$, and then
treat the nonlinear case.

\subsection{The case $Q(\x,\z)$ is linear in $\z$}
\label{ssec:3.1}

Suppose $Q(\x,\z)$ is linear in $\z :=(\z_1,\ldots, \z_s)$,
\be \label{Qxz:lin:z}
Q(\x,\z) := Q_0(\x) + \z_1 Q_1(\x) + \cdots + \z_s Q_s(\x),
\ee
where $Q_0(\x), \ldots, Q_s(\x) \in \mc{S}\R[\x]^{t\times t}$
are symmetric matrix polynomials.
A certificate for the inclusion $P_G \subseteq P_Q$
is the existence of polynomials $p_1(\x), \ldots, p_s(\x) \in \rx$
and matrix polynomials $V_i(\x,\y)$ such that
\be \label{Pp(x):in:Q(G)}
\left\{ \baray{c}
Q_0(\x) + p_1(\x) Q_1(\x) + \cdots + p_s(\x) Q_s(\x) = \\[1mm]
 V_0(\x,\y)^TV_0(\x,\y)
 +
 {\sum}_{i=1}^\ell  V_i(\x,\y)^T G(\x,\y) V_i(\x,\y) .
\earay \right.
\ee
Indeed, if $x\in P_G$, then there exists $y\in\R^r$
with $G(x,y)\succeq0$, thus
$Q(x,z) \succeq 0$ for $z=(p_1(x), \ldots, p_s(x))$ by \eqref{Pp(x):in:Q(G)}.
This certifies that $P_G\subseteq P_Q$.

In the following, we show that
\reff{Pp(x):in:Q(G)} is almost necessary for ensuring
$P_G\subseteq P_Q$. Our main conclusion is that
\reff{Pp(x):in:Q(G)} must hold if
$P_G$ is contained in the interior of $P_Q$
(i.e., $P_G \subseteq \mbox{int}(P_Q)$),
under the archimedean condition.
Since $G$ is a matrix polynomial in $(\x,\y)$,
its quadratic module $\QM_t(G)$ is a subset of
$\mc{S}\R[\x,\y]^{t\times t}$.
%
%
The archimedeanness of $\QM_t(G)$ requires
the existence of $f \in \QM_t(G)$ such that
the set $\{(x,y)\in\R^n\times\R^r: f(x,y) \succeq 0\}$ is compact.

\begin{theorem}\label{thm:dropSatz}
Let $G(\x,\y) \in \mc{S}\R[\x,\y]^{k\times k}$
and let $Q(\x,\z)$ be as in \eqref{Qxz:lin:z}.
Assume that $\QM_t(G)$ is archimedean. If for all $x\in P_G$
there exists $z\in\R^s$ with $Q(x,z)\succ0$,
then there exists a polynomial tuple 
$p(\x)=(p_1(\x), \ldots, p_s(\x))$ such that
$Q(\x,p(\x)) \in \QM_t(G)$, i.e., \eqref{Pp(x):in:Q(G)} holds.
\end{theorem}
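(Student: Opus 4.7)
The strategy is to reduce to the matrix Putinar Positivstellensatz (Theorem~\ref{thm:pos}): once we produce a polynomial tuple $p(\x)=(p_1(\x),\ldots,p_s(\x))$ satisfying $Q(\x,p(\x))\succ 0$ on $S_G$, archimedeanness of $\QM_t(G)$ immediately yields $Q(\x,p(\x))\in\QM_t(G)$, which is precisely \eqref{Pp(x):in:Q(G)}.

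To build such a $p$, I first construct a continuous lifting. By archimedeanness $S_G$ is compact, hence its projection $P_G$ is compact as well. For each $x_0\in P_G$ pick $z_{x_0}\in\R^s$ with $Q(x_0,z_{x_0})\succ 0$; by continuity of $Q$ in $\x$ the strict inequality persists on an open neighborhood $U_{x_0}\subseteq P_G$ of $x_0$. Choose a finite subcover $U_{x_1},\ldots,U_{x_N}$ of $P_G$ with a subordinate continuous partition of unity $\{\eta_i\}_{i=1}^N$, and set $\phi(x):=\sum_i \eta_i(x)\, z_{x_i}$. Because $Q$ is \emph{linear} in $\z$ and $\sum_i \eta_i(x)=1$,
\[
Q(x,\phi(x)) \;=\; \sum_{i=1}^N \eta_i(x)\, Q(x,z_{x_i}),
\]
and every summand with $\eta_i(x)>0$ forces $x\in U_{x_i}$ and hence $Q(x,z_{x_i})\succ 0$. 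Since at least one $\eta_i(x)$ is positive, $Q(x,\phi(x))\succ 0$ on $P_G$.

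Next I approximate $\phi$ by polynomials. By compactness and continuity there exists $\delta>0$ with $Q(x,\phi(x))\succeq \delta I_t$ on $P_G$, and each $Q_j(\x)$ is bounded in operator norm on $P_G$, say by $M$. The Weierstrass theorem supplies polynomials $p_j\in\R[\x]$ with $\sup_{P_G}|p_j-\phi_j|<\delta/(2sM)$, and then
\[
Q(x,p(x)) \;=\; Q(x,\phi(x)) + \sum_{j=1}^s \bigl(p_j(x)-\phi_j(x)\bigr)\, Q_j(x) \;\succeq\; \tfrac{\delta}{2}\, I_t
\]
for all $x\in P_G$. Viewing $Q(\x,p(\x))\in\mc{S}\R[\x,\y]^{t\times t}$ as constant in $\y$ and noting that $(x,y)\in S_G$ forces $x\in P_G$, we deduce $Q(\x,p(\x))\succ 0$ on $S_G$, and Theorem~\ref{thm:pos} finishes the job.

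The main obstacle is the continuous-lifting step: the linearity of $Q$ in $\z$ is what lets the partition-of-unity convex combination remain inside the strict positivity locus of $Q(x,\cdot)$. Without such convexity, the glueing fails and no continuous, let alone polynomial, lifting need exist---this is the phenomenon flagged by Example~\ref{ex:nolift} and is why the nonlinear case will require the separate sos-concavity analysis of Theorem~\ref{thm:Pos:Qsoscvz}.
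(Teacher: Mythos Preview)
Your proof is correct and follows the same overall architecture as the paper's---construct a continuous section $\phi:P_G\to\R^s$ with $Q(x,\phi(x))\succ0$, approximate it by polynomials via Stone--Weierstra\ss, then invoke the matrix Positivstellensatz---but the construction of $\phi$ is genuinely different.

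The paper obtains $\phi$ as the unique minimizer of the parametric convex program
\[
\phi(x)=\arg\min\ \{\,z^Tz : Q(x,z)\succeq \tfrac{\eps}{2}I\,\},
\]
and then argues continuity of $\phi$ by combining continuity of the optimal value (citing Shapiro's parametric sensitivity results) with uniqueness of the minimizer. You instead glue the local lifts $z_{x_i}$ with a partition of unity and use the affine dependence of $Q$ on $\z$ to show the convex combination stays strictly positive. Your route is more elementary: it avoids any appeal to perturbation theory for semidefinite programs and makes the role of linearity in $\z$ completely transparent. The paper's optimization-based construction, on the other hand, yields a canonical $\phi$ and transfers verbatim to the sos-concave setting of Theorem~\ref{thm:Pos:Qsoscvz} (the feasible set remains convex), whereas your argument would need the extra observation that concavity of $Q(x,\cdot)$ gives $Q(x,\sum_i\eta_i z_{x_i})\succeq\sum_i\eta_i Q(x,z_{x_i})$ via Jensen---still easy, but a slightly different mechanism.
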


\begin{proof}
Since $\QM_t(G)$ is archimedean,
the set $S_G$ is compact, hence so is the projection $P_G$.
For each $x \in P_G$, there exists $z$ (depending on $x$,
that is, $z=z(x)$), such that $Q(x, z(x) ) \succ 0$.
Let $\dt = \dt(x) >0$ be such that
$Q(w,z(x) ) \succ 0$ for all $w $ in
the open ball $\mc{B}(x, 2\dt)$
centered at $x$  with radius $2\dt$.
Then, $\{ \mc{B}(x, \dt(x)) \}_{x\in P_G}$ is an open covering for $P_G$.
By  compactness, there exist finitely many of 
these open balls covering $P_G$, say,
\[
P_G \subseteq \bigcup_{i=1}^N  \mc{B}( x^i, \dt(x^i) ).
\]
For each $i$, there exists $\eps_i>0$
such that $Q(w,z(x^i) ) \succeq \eps_i I$ for all
$w \in \mc{B}(x^i, \dt(x^i))$.
Hence, we can choose $\eps>0$ small enough such that
for all $x\in P_G$ there exists $z\in\R^s$ with
$
Q(x,z) \succeq \eps I.
$
Define the function
\be \label{phi(x):P(x,y)>=eps/2}
\baray{rl}
\phi(x):=\arg\min &  z^Tz \\
s.t. &   Q_0(x) + z_1 Q_1(x) + \cdots + z_s Q_s(x) \succeq  \frac{\eps}{2} I.
\earay
\ee
From the above, we can see that the feasible set
of \reff{phi(x):P(x,y)>=eps/2} has nonempty interior for all $x\in P_G$.
Because of the strict convexity of $\z^T\z$,
the minimizer $\phi(x)$ is unique.
Further, the objective is a coercive function, that is,
for every number $\tau >0$, the set
$
\{z :  z^Tz \leq \tau  \}
$
is compact. Hence 
the optimal value function
$\phi(x)^T\phi(x)$ is continuous in $x$.
This can be inferred from \cite[Theorem~10]{Shapiro97}
or \cite[Theorem~4.1.10]{SDPbk}.

The minimizer function $\phi(x)$ is also a continuous function on
$P_G$, which can be seen as follows.
Suppose $\{ x^k \} \subseteq P_G$ is a sequence such that $x^k \to x \in P_G$.
Then $\| \phi(x^k) \|_2 \to \| \phi(x)\|_2$
by the continuity of the objective function.
Clearly, $\{ \phi(x^k) \}$ is bounded. Let $u$ 
be one of its accumulation points.
Then $\| u \|_2 = \| \phi(x) \|_2$. Clearly,
$u$ is a feasible point corresponding to $x$.
Hence, $u$ is a minimizer for \reff{phi(x):P(x,y)>=eps/2},
and by the uniqueness, $u=\phi(x)$.
So  $\phi(x)$ is a continuous function on $P_G$.
Note that
\[
Q(x, \phi(x) ) \succeq \frac{\eps}{2} I \quad \mbox{ on } \quad P_G.
\]
By the Stone-Weierstra\ss{} theorem (see e.g.~\cite[Theorem 7.32]{Rud}),
$\phi(x)$ can be approximated arbitrarily well
by polynomial functions.
In particular, there exists a polynomial $p(\x)$ such that
\[
Q(x, p(x) ) \succ 0 \quad \mbox{ on } \quad P_G.
\]
That is, $Q(\x,p(\x))$ is symmetric matrix polynomial
that is positive definite on $P_G$.
By the archimedean property of
$\QM_t(G)$,
the classical matrix Positivstellensatz
(see e.g.~\cite{KleSch10,SchHol06}) implies that
\[
Q(\x, p(\x) ) =
V_0(\x,\y)^T V_0(\x,\y)
+
 \sum_i  V_i(\x,\y)^T G(\x,\y) V_i(\x,\y)
\]
for some matrix polynomials $V_i(\x,\y)$.
\end{proof}

\subsection{The case $Q(\x,\z)$ is nonlinear in $\z$}
\label{ssec:3.2}

Denote the set of exponents by
\[
\N^s_{2d} := \{ \af = (\af_1, \ldots, \af_s) \in \Z_{\geq0}^s \mid
\af_1 + \cdots + \af_s \leq 2d \}.
\]
We consider the case that $Q(\x,\z)$ is polynomial in $\z$, say,
\be \label{Qxz:nlz}
Q(\x,\z) := \sum_{   \af \in \N^s_{2d}  }
\z_1^{\af_1} \cdots \z_s^{\af_s}   Q_\af(\x),
\ee
with each $Q_\af(\x) \in \mc{S}\R[\x]^{t\times t}$.
If we parameterize $\z_i$ by a polynomial $p_i(\x)$,
a natural generalization of the certificate \reff{Pp(x):in:Q(G)} is
\be \label{p(x)^af:Posatz}
Q(\x,p(\x))=
\sum_{ \af \in \N^s_{2d} }  \,  p_1(\x)^{\af_1} \cdots p_s(\x)^{\af_s}
Q_\af(\x)  \, \in \, \QM_t(G).
\ee
However, \reff{p(x)^af:Posatz} is nonlinear in the coefficients of
$p = (p_1, \ldots, p_s)$. Generally,
the existence of $p$ satisfying \reff{p(x)^af:Posatz}
cannot be checked by solving a semidefinite program.

Here we propose a convexification of \reff{p(x)^af:Posatz}.
If each product $p_1(\x)^{\af_1} \cdots p_s(\x)^{\af_s}$
is replaced by a new polynomial $p_\af(\x)$,
then \reff{p(x)^af:Posatz} becomes
\be \label{pafP(x):in:G}
\left\{ \baray{c}
\sum_{ \af \in \N^s_{2d} }  p_\af(\x) Q_\af(\x)= \\[1mm]
V_0(\x,\y)^TV_0(\x,\y)
+
 {\sum}_{i=1}^\ell  V_i(\x,\y)^T G(\x,\y) V_i(\x,\y) ,
\earay \right.
\ee
for some matrix polynomials $V_i(\x, \y)$.
However, \eqref{pafP(x):in:G} does not imply
$P_G \subseteq P_Q$ in general.
To remedy this, let
\[
p \, := \, (p_\af)_{ \af \in \N^s_{2d}} ,
\]
and define the matrix polynomial
\[
M( p ) \, := \, ( p_{\af+\bt} )_{ \af, \bt \in \N^s_d }.
\]
In Proposition~\ref{pro:Qsosz:PGinPQ} below,
under some convexity conditions,
we show that \reff{pafP(x):in:G}
is a certificate for $P_G \subseteq P_Q$.
The matrix polynomial $Q(\x,\z)$ is said to be
{\it sos-concave} in $\z$ at a point $x$ if for every $\xi \in \re^t$
the polynomial $\xi^TQ(x,\z)\xi$
is sos-concave in $\z$, i.e., its Hessian
$\nabla^2(\xi^TQ(x,\z)\xi)$ about $\z$ is an sos-matrix polynomial in $\z$.
We refer to \cite{njwPMI} for more on 
sos-concavity/convexity
of matrix polynomials.

\begin{prop} \label{pro:Qsosz:PGinPQ}
Let $G(\x,\y) \in \mc{S}\R[x,y]^{k\times k}$
and let $Q(\x,\z)$ be as in \eqref{Qxz:nlz}.
Assume $Q(x,\z)$ is sos-concave in $\z$ at  every $x\in P_G$.
If a polynomial tuple $p$ satisfies \eqref{pafP(x):in:G}
and $M(p) \succeq 0$ on $P_G$, then $P_G \subseteq P_Q$.
\end{prop}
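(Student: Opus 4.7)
The goal is to produce, for every $x \in P_G$, a witness $z \in \R^s$ satisfying $Q(x,z) \succeq 0$. My plan is to extract such a $z$ from the ``first moments'' $(p_{e_i}(x))_{i=1}^s$ of the tuple $p$, by applying a Jensen-type inequality for sos-concave polynomials against a PSD pseudo-expectation built from $p$.

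First, I would fix $x \in P_G$ and pick $y \in \R^r$ with $G(x,y) \succeq 0$. Evaluating the identity \eqref{pafP(x):in:G} at $(x,y)$ renders the right-hand side PSD, and since the left-hand side depends only on $x$, this already yields $\sum_{\af \in \N^s_{2d}} p_\af(x) Q_\af(x) \succeq 0$. I then interpret the data $(p_\af(x))_\af$ as a truncated pseudo-moment functional $L_x : \R[\z]_{\leq 2d} \to \R$ defined by $L_x(\z^\af) := p_\af(x)$. The hypothesis $M(p)(x) \succeq 0$ is precisely the statement that $L_x(q^2) \geq 0$ for every $q \in \R[\z]_{\leq d}$.

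In the generic case $p_0(x) > 0$, normalize $\tilde L_x := L_x / p_0(x)$ so that $\tilde L_x(1) = 1$, and set $z^* := (\tilde L_x(\z_1),\ldots,\tilde L_x(\z_s)) \in \R^s$. For each $\xi \in \R^t$ the scalar polynomial $f_\xi(\z) := \xi^T Q(x,\z)\xi$ is sos-concave in $\z$, so an sos decomposition $-\nabla^2_\z f_\xi(\z) = \sum_j r_j(\z)^T r_j(\z)$ exists. Taylor's formula with integral remainder around $z^*$ reads $f_\xi(\z) = f_\xi(z^*) + \nabla f_\xi(z^*)^T(\z-z^*) - R_\xi(\z)$ with $R_\xi(\z) = \int_0^1 (1-t) \sum_j \|r_j(z^* + t(\z-z^*))(\z-z^*)\|^2 \, dt$. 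At each fixed $t \in [0,1]$ the integrand is a sum of squares of polynomials in $\z$ of degree $\leq d$, so $\tilde L_x$ applied to it is nonnegative; interchanging $\tilde L_x$ with the $t$-integral (legitimate since only finitely many monomial coefficients appear) yields $\tilde L_x(R_\xi) \geq 0$. Combined with $\tilde L_x(1) = 1$ and $\tilde L_x(\z - z^*) = 0$, this gives the Jensen-type bound $\tilde L_x(f_\xi) \leq f_\xi(z^*)$. The left-hand side equals $\tfrac{1}{p_0(x)} \xi^T \bigl(\sum_\af p_\af(x) Q_\af(x)\bigr)\xi \geq 0$, so $\xi^T Q(x, z^*) \xi \geq 0$ for every $\xi$, i.e.\ $Q(x, z^*) \succeq 0$, and $x \in P_Q$ with witness $z=z^*$.

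The hard part will be establishing the Jensen-type bound for a PSD pseudo-expectation acting on a matrix polynomial that is merely sos-concave in $\z$; the integral-remainder form of Taylor's theorem, combined with the sos decomposition of $-\nabla^2_\z f_\xi$, is exactly what allows one to control $\tilde L_x(R_\xi)$ from below using only the degree-$d$ moment-matrix condition $M(p)(x)\succeq 0$. The degenerate boundary case $p_0(x) = 0$ is handled separately: positive semidefiniteness of $M(p)(x)$ with zero $(0,0)$-entry forces the entire $0$-indexed row and column to vanish, hence $p_{e_i}(x) = 0$ for all $i$, and one recovers $x \in P_Q$ either by running the same Taylor argument centered at $z^* = 0$ or by a continuity argument approaching $x$ from points where $p_0>0$.
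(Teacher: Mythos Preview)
Your argument is essentially the paper's: pick $x\in P_G$, observe that \eqref{pafP(x):in:G} forces $\sum_\alpha p_\alpha(x)Q_\alpha(x)\succeq0$, and then for each $\xi$ use a Jensen-type inequality for the sos-concave polynomial $f_\xi(\z)=\xi^TQ(x,\z)\xi$ against the pseudo-moment functional $L_x$ to conclude $f_\xi(u)\geq L_x(f_\xi)\geq0$ at the first-moment point $u=(p_{e_1}(x),\ldots,p_{e_s}(x))$. The paper invokes this Jensen inequality as a black box (citing \cite[Theorem~9]{HN} and \cite[Theorem~2.6]{Las09}), whereas you prove it from scratch via Taylor's formula with integral remainder together with the sos decomposition of $-\nabla^2_\z f_\xi$; your derivation is correct (the squared terms in $R_\xi$ have degree at most $d$ in $\z$, so the moment-matrix condition $M(p)(x)\succeq0$ is exactly what controls them) and is in fact one of the standard ways to establish that cited inequality.

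One caveat: in the paper's setup one has $p_0\equiv1$ by convention (the $p_\alpha$ were introduced to relax the products $p_1^{\alpha_1}\cdots p_s^{\alpha_s}$, and the paper's proof explicitly records ``note $w_0=1$''), so the case $p_0(x)=0$ never arises. This is just as well, because neither of your proposed fixes for that case actually works: with $L_x(1)=0$ the Taylor identity collapses to $L_x(f_\xi)=-L_x(R_\xi)\leq0$, which together with $L_x(f_\xi)\geq0$ only yields $L_x(f_\xi)=0$ and says nothing about $f_\xi(0)$; and a continuity/approximation argument is unavailable in general since nothing guarantees nearby points of $P_G$ with $p_0>0$. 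Without the normalization $p_0=1$ the proposition is in fact false (take $p\equiv0$ and all $V_i\equiv0$), so you should simply adopt that convention and drop the degenerate case.
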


\begin{proof}
Define a matrix polynomial in $\x=(\x_1,\ldots, \x_n)$
and $\w = (\w_\af)_{\af \in \N^s_{2d} }$ as
\[
F(\x,\w):= \sum_{ \af \in \N^s_{2d} }   \w_\af Q_\af(\x).
\]
Pick an arbitrary $x\in P_G$. Let $w_\af = p_\af(x)$
(note $w_0 = 1$), then
\[
F(x,w) \succeq 0, \quad M(w) \succeq 0.
\]
For an arbitrary $\xi \in \re^t$,
the polynomial $q(\z):=\xi^TQ(x,\z)\xi$ is sos-concave in $\z$.
Let $u = (w_1, \ldots, w_s)$, then one can show that
(see e.g.~\cite[Theorem~9]{HN} or \cite[Theorem~2.6]{Las09})
\[
q(u) \geq
\sum_{ \af \in \N^s_{2d} }   \w_\af \xi^T Q_\af(x) \xi
=\xi^T F(x,w) \xi \geq 0.
\]
Since $q(u) = \xi^T Q(x,u)\xi \geq 0$ and $\xi$ is arbitrary,
we can conclude that $Q(x,u) \succeq 0$, i.e., $x \in P_Q$.
The above can also be deduced from the results in \cite{njwPMI}.
%
%
Since $x\in P_G$ was arbitrary,
we conclude that $P_G \subseteq P_Q$.
\end{proof}

In the following, we show that \reff{pafP(x):in:G}
is almost a necessary certificate for $P_G \subseteq P_Q$
under  conditions similar to those in
Theorem~\ref{thm:dropSatz} and
Proposition~\ref{pro:Qsosz:PGinPQ},
and under an additional sos-concavity condition.

\begin{theorem}  \label{thm:Pos:Qsoscvz}
Let $G(\x,\y) \in \mc{S}\R[x,y]^{k\times k}$
and let $Q(\x,\z)$ be as in \eqref{Qxz:nlz}.
Assume that  $\QM_t(G)$ is archimedean.
If for every $x \in P_G$, $Q(x,\z)$ is sos-concave in $\z$,
and there exists $z$ such that $Q(x,z) \succ 0$,
then there exist polynomials
$ p_\af \in \R[\x]$ ($\af \in \N^s_{2d}$)
such that \eqref{pafP(x):in:G} holds
and $M(p)$ is an sos matrix polynomial.
\end{theorem}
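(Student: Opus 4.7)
The plan is to follow the strategy of Theorem~\ref{thm:dropSatz}: construct a continuous lifting $\phi : P_G \to \R^s$ with $Q(x,\phi(x)) \succ 0$, approximate $\phi$ by a polynomial tuple $\psi$, and then exploit a \emph{multiplicative} choice of the $p_\af$ so that both the containment \eqref{pafP(x):in:G} and the sos condition on $M(p)$ come out automatically.

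First, archimedeanness of $\QM_t(G)$ forces $S_G$, and hence its projection $P_G$, to be compact. Exactly as in the proof of Theorem~\ref{thm:dropSatz}, combining the strict feasibility hypothesis with a compactness-and-open-cover argument yields a uniform $\eps > 0$ such that for every $x \in P_G$ there exists $z \in \R^s$ with $Q(x,z) \succeq \eps I$. Define
\[
\phi(x) \,:=\, \arg\min\bigl\{\, z^Tz \,:\, Q(x,z) \succeq \tfrac{\eps}{2} I \,\bigr\}.
\]
The sos-concavity of $Q(x,\z)$ in $\z$ implies matrix-concavity in the Loewner order, so the feasible region above is convex (and nonempty) for each $x \in P_G$. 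Strict convexity of $z^Tz$ yields uniqueness of $\phi(x)$, and the same variational argument used in the proof of Theorem~\ref{thm:dropSatz} (coercivity of the objective plus continuous dependence of the feasible set on $x$) shows $\phi$ is continuous on $P_G$, with $Q(x,\phi(x)) \succeq \tfrac{\eps}{2} I$.

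Stone--Weierstra\ss{} then produces polynomials $\psi_1,\ldots,\psi_s \in \R[\x]$ approximating the components of $\phi$ uniformly on $P_G$ well enough that $Q(x,\psi(x)) \succ 0$ on $P_G$. Put
\[
p_\af(\x) \,:=\, \psi_1(\x)^{\af_1} \cdots \psi_s(\x)^{\af_s} \qquad (\af \in \N^s_{2d}),
\]
so $p_0 \equiv 1$ and $\sum_{\af \in \N^s_{2d}} p_\af(\x)\, Q_\af(\x) = Q(\x,\psi(\x))$. Since this matrix polynomial is positive definite on $P_G$ and $\QM_t(G)$ is archimedean, the matrix Positivstellensatz (Theorem~\ref{thm:pos}) places it inside $\QM_t(G)$, yielding \eqref{pafP(x):in:G}. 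Finally, letting $v(\z) = (\z^\af)_{|\af| \leq d}$ denote the vector of monomials in $\z$ of degree at most $d$, the multiplicative choice of the $p_\af$ gives
\[
M(p)(\x) \,=\, v(\psi(\x))\, v(\psi(\x))^T,
\]
a single hermitian square, so $M(p)$ is globally sos as required.

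The main obstacle is establishing existence and continuity of the lifting $\phi$: this is precisely where the sos-concavity hypothesis is essential, via the matrix-concavity it provides, in making the feasible region of the norm-minimization problem convex and hence the minimizer unique. Once $\phi$ is in hand, the remainder is a direct application of Theorem~\ref{thm:pos} together with the simple observation that the multiplicative ansatz $p_\af = \psi^\af$ forces $M(p)$ to be a rank-one hermitian square by construction.
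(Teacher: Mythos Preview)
Your proposal is correct and follows essentially the same route as the paper's own proof: build a continuous lift via the strictly convex norm-minimization over the convex feasible set $\{z : Q(x,z)\succeq \tfrac{\eps}{2}I\}$, approximate by a polynomial tuple, apply the matrix Positivstellensatz, and then take the multiplicative ansatz $p_\af=\psi^\af$ so that $M(p)$ is a rank-one hermitian square. The only cosmetic difference is notation ($\psi$ versus the paper's $q$) and your more explicit remark that sos-concavity is what guarantees convexity of the feasible region.
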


\begin{proof}
The proof is similar to the one for Theorem~\ref{thm:dropSatz}.
First, we can similarly prove that there exists $\eps>0$ such that
for all $x \in P_G$ there exists $z$ with $Q(x,z) \succeq \eps I$.
Consider the optimization problem
\be \label{min:yTy:sos-y}
\min \quad z^Tz  \quad s.t. \quad Q(x, z) \succeq \frac{\eps}{2} I.
\ee
For each $x\in P_G$, the feasible set of \reff{min:yTy:sos-y}
has nonempty interior. It has a unique minimizer,
which we also denote by $\phi(x)$. Note that
\reff{min:yTy:sos-y} is a convex optimization problem
and the objective is coercive. Furthermore,
$\phi(x)$ is a continuous function on $P_G$.
By the Stone-Weierstra\ss{} theorem,
there exists a polynomial tuple
$q(\x) := (q_1(\x), \ldots, q_s(\x))$
such that $Q(x, q(x)) \succeq \frac{\eps}{4}I$ on $P_G$.
By the archimedean property and the classical matrix
Positivstellensatz (see e.g.~\cite{KleSch10,SchHol06}), we get
\[
Q(\x, q(\x)) \, \in \, \QM_t(G).
\]
For each $\af$, let $p_\af = q^\af$, then
$
M(p) \, = \, [q]_d [q]_d^T.
$
In the above, $[q]_d$ is the vector of all monomials in $q$
of degrees $\le d$. Clearly, $M(p)$
is an sos matrix polynomial and the proof is complete.
\end{proof}

\begin{exm}\label{ex:nolift}
We want to point out that a lifting continuous map $\phi:P_G\to \mbox{int}(S_Q)$ 
need not exist without some convexity assumptions on $Q$. 
Hence
  Theorems \ref{thm:dropSatz} and \ref{thm:Pos:Qsoscvz} do not generalize to the non-convex case.
Here are two simple examples.
\begin{enumerate}[label={\rm(\alph*)}]
\item
Form $S_Q$ by rotating the semialgebraic set defined as the part of the hyperbola $x^2-z^2\geq1$
lying inside $x^2\leq4$ by $60^\circ$ about the origin.
That is, 
\[
Q(\x,\z):=\diag
\left(
-4-(-\sqrt3\x+\z)^2+(\x+\sqrt3\z)^2,\,
16-(\x+\sqrt3\z)^2
\right).
\]

\def\nos{100}
\begin{center}
\ifx\foo\undefined
\begin{tikzpicture}[domain=-3.3:3.3,scale=1.25] \else
\begin{tikzpicture}[domain=-3.3:3.3,scale=1.5] 
\fi
\draw[very thin,color=gray!25] (-2.9,-2.9) grid (2.9,2.9);
\fill[color=ogreen, domain=1.31696:-1.31696, samples=\nos,rotate=60,fill opacity=0.2] plot ({cosh(\x)},{sinh(\x)});
\fill[color=ogreen, domain=1.31696:-1.31696, samples=\nos,rotate=60,fill opacity=0.2] plot ({-cosh(\x)},{sinh(\x)});
\draw[color=black, domain=-1.5:1.5, thick,  samples=\nos,rotate=60] plot ({cosh(\x)},{sinh(\x)});
\draw[color=black, domain=-1.5:1.5, thick,  samples=\nos,rotate=60] plot ({-cosh(\x)},{sinh(\x)});

\draw[color=black, domain=-1.31696:1.31696, ultra thick,  samples=\nos,rotate=60] plot ({cosh(\x)},{sinh(\x)});
\draw[color=black, domain=-1.31696:1.31696, ultra thick,  samples=\nos,rotate=60] plot ({-cosh(\x)},{sinh(\x)});

\draw[color=black, ultra thick,  samples=\nos,rotate=60] (2,1.73205) -- (2,-1.73205);
\draw[color=black, ultra thick,  samples=\nos,rotate=60] (-2,1.73205) -- (-2,-1.73205);

\draw[->] (-3.1,0) -- (3.1,0) node[right] {$\x$}; \draw[->] (0,-3.1) -- (0,3.1) node[above] {$\z$};
\filldraw[color=black] (1,0) circle (0.02) node[below] {$1$};
\filldraw[color=black] (2,0) circle (0.02) node[below] {$2$};
\filldraw[color=black] (0,1) circle (0.02) node[above left] {$1$};
\filldraw[color=black] (0,2) circle (0.02) node[above left] {$2$};
\filldraw[color=black] (2.5,0) circle (0.04) node[below] {$\frac52$};
\filldraw[color=black] (-2.5,0) circle (0.04) node[below] {$-\frac52$};
\filldraw[color=black] (.5,0) circle (0.04) node[below] {$\frac12$};

\draw[dashed, color=gray!50] (0.5, -2.59808) -- (.5,0);
\draw[dashed, color=gray!50] (2.5, 0.866025) -- (2.5,0);
\draw[dashed, color=gray!50] (-2.5, -0.866025) -- (-2.5,0);

\draw[ultra thick,color=dred] (-2.5,0) -- (2.5,0);

\draw [->, thick ] (-2.5,1.5) node [above] {\textcolor{dred}{$P_Q$}} to [bend  left=35] (-1.5,0.1) ;

\end{tikzpicture} \vspace{-2mm}
\end{center}~\centerline{\textcolor{ogreen}{$S_Q$}}

\bigskip

Then $P_Q=\left[-\frac52,\frac52\right]$.
The maximal $x$-coordinate of a point in the bottom
component of $S_Q$ is $\frac{1}{2}$, so by letting
$P_G=[-1,1]$ it is clear that each point $x$ in $P_G$ can be lifted to a 
point $(x,z)\in S_Q$ with $Q(x,z)\succ0$, but there is no lifting continuous map
$P_G\to S_Q$.
\item
The same phenomena can occur even with an
S-shape connected  $S_Q\subseteq\R^2$. 
Let $S_Q$ be the band around a cubic curve,

\ifx\foo\undefined
\hspace{.05cm}
\else
\hspace{2cm}
\fi
\begin{tikzpicture}[domain=-1.5:1.5] 
\begin{axis}[thick,smooth,no markers,
rotate=90, 
scale=1.4,
x=1.5cm,y=1.5cm,
xmin=-1.9,xmax=1.9,ymin=-2.4,ymax=2.4,
    grid=both,
    grid style={line width=.1pt, draw=gray!10},
    major grid style={line width=.3pt,draw=gray!50},
    axis lines*=middle,
    minor tick num=0,
    axis line style={draw=gray},
    xticklabels={,,},
    yticklabels={,,}
]

        \addplot+[name path=A,black] {.25-1*((\x)*((\x)^2-1))};
        \addplot+[name path=B,black] {-.25-1*((\x)*((\x)^2-1))};

        \addplot[color=ogreen,opacity=.2] fill between[of=A and B];

 \addplot [black]  coordinates {(1.5,-1.625)  (1.5,-2.125)};

 \addplot [black]  coordinates {(-1.5,1.625)  (-1.5,2.125)};
         \filldraw (axis cs:1,0) circle[radius=2] node[below left] {$1$};
         \filldraw (axis cs:0,-1) circle[radius=2] node [below right] {$1$};

\draw[thick,->] (axis cs:-1.6,0) -- (axis cs:1.6,0) node[above right] {$\z$}; 
\draw[thick,->] (axis cs:0,2.1) -- (axis cs:0,-2.1) node[above right] {$\x$}; 

\end{axis}

\end{tikzpicture} 
\[
S_Q=\Big\{(x,z)\in\R^2 \colon \big|x-z(z^2-1)\big|\leq\frac14,\, |z|\leq\frac32
\Big\}.
\]

We have $P_Q=\left[-\frac{17}8,\frac{17}8\right]$.
As before, 
each point $x\in
P_G:=[-1,1]$ 
admits a lift to a 
point $(x,z)\in \mbox{int}(S_Q)$, but there is no lifting continuous map 
$P_G\to S_Q$.
\end{enumerate}
\end{exm}

\subsection{Some examples}

In the following, we give some examples of the lifted matrix
Positivstellensatz proved in Theorems~\ref{thm:dropSatz}
and \ref{thm:Pos:Qsoscvz}. The notation $e_i$
denotes the standard $i$th unit vector, i.e.,
its $i$th entry is one and all other entries are zero.

\begin{exm}
Consider the matrix polynomials
\[
G(\x,\y) = \bbm 1-\y-\x_1^2 &  \x_1\x_2 \\ \x_1\x_2 & \y -\x_2^2 \ebm, \quad
Q(\x,\z) = \bbm 1+\z\,\x_2 &   \z-2\,\x_1 \\  \z-2\,\x_1   & 1-\z\,\x_2 \ebm.
\]
Then $ P_G=\{(x_1,x_2)\in\R^2\colon |x_1\pm x_2|\leq1\}$,
and is contained in
\[
P_Q =\{(x_1,x_2)\in\R^2\colon 1+x_2^2-4x_1^2x_2^2\geq0\}.
\]
The quadratic module $\QM_2(G)$ is archimedean since
\begin{multline} \nn
3-\x_1^2-\x_2^2-2y^2 = e_1^TG(\x,\y)e_1 + e_2^TG(\x,\y)e_2 +
 (1-\y)^2+\\ \x_2^2(1-\y)^2+ \x_1^2(1+\y)^2+
e_1^TG(\x,\y)e_1 (1+y)^2 + e_2^TG(\x,\y)e_2(1-\y)^2.
\end{multline}
The polynomial $p_1$ in Theorem~\ref{thm:dropSatz}
can be chosen to be $\x_1$; then
\[
Q(\x,p(\x)) = \half \bbm \x_1+\x_2 & -1 \\ -1 & \x_1-\x_2 \ebm^2+
\frac{1-\x_1^2-\x_2^2}{2}I.
\]
A certificate of the form
\reff{Pp(x):in:Q(G)} for $P_G \subseteq P_Q$ is
\[
\ell = 4, \quad
V_0  = \frac{1}{\sqrt{2}} \bbm \x_1+\x_2 & -1 \\ -1 & \x_1-\x_2 \ebm,
\]
\[
V_1  = \bbm \frac{1}{\sqrt{2}} &  0 \\ 0  &  0 \ebm, \quad
V_2  = \bbm 0  &  0 \\ \frac{1}{\sqrt{2}} &  0 \ebm,
V_3  = \bbm 0  & \frac{1}{\sqrt{2}}  \\ 0  &  0 \ebm, \quad
V_4  = \bbm 0  &  0 \\ 0  & \frac{1}{\sqrt{2}}  \ebm.
\]
\end{exm}

\begin{exm}\label{ex:3.5}
We present an example where the assumptions of
Theorem \ref{thm:dropSatz} are not met, but the
conclusion still holds.
Consider the matrix polynomials
\[
G(\x,\y)  = \bbm  1-\x_1^2 &  \x_1+\x_2 &  \x_2^2 \\
\x_1+\x_2  & 0 & \x_1+\x_2 \\ \x_2^2 & \x_1+\x_2 &  \y \ebm, \quad
Q(\x,\z)  = \bbm 1+2\eps+\x_2 &  \x_1^2  \\  \x_1^2  & \z \ebm,
\]
for $\eps > 0$. The projection set
$
P_G=\{ (x_1,-x_1)\in\R^2\colon -1<x_1<1\}.
$
It is bounded but not closed.
The intersection $\QM_2(G) \cap \R[\x]$ is archimedean, because
\[
(2-\x_1^2-\x_2^2)  = e_1^T G(\x,\y)e_1 +
\bbm 1 \\ \half(\x_1-\x_2) \\0\ebm^T G(\x,\y)
\bbm 1 \\ \half(\x_1-\x_2) \\0 \ebm.
\]
However, the quadratic module $\QM_2(G)$
itself is not archimedean, since $S_G$ is unbounded.
The lifting polynomial $p_1$ 
can be chosen as $\eps^{-1} \x_1^2$, then
\[
Q(\x,p(\x)) = \x_1^2 \bbm \eps &  1 \\ 1  & \eps^{-1} \ebm +
(1+\x_2+\eps+ \eps(1-\x_1^2) ) \bbm 1 & 0 \\ 0  &  0 \ebm.
\]
Note the following representations:
\[
1+\x_2+\eps = \bbm \sqrt{\eps} \\ \sqrt{4\eps}^{-1} \\ 0 \ebm^T
G(\x,\y) \bbm \sqrt{\eps} \\ \sqrt{4\eps}^{-1} \\ 0 \ebm +
1-\x_1 + \eps \x_1^2,
\]
\[
1-\x_1^2 = e_1^T G(\x,\y) e_1, \quad
1-\x_1 = \frac{1-\x_1^2}{2} + \frac{(\x_1-1)^2}{2}.
\]
A certificate of the form
\reff{Pp(x):in:Q(G)} for $P_G \subseteq P_Q$
is that $\ell=3$ and
\[
V_0  = \bbm \sqrt{\eps} \x_1 &  \sqrt{\eps}^{-1} \x_1 \\
\sqrt{\eps} \x_1 & 0  \\
\frac{\x_1 -1}{ \sqrt{2} } & 0 \ebm, \,
V_1 = \bbm \sqrt{\eps}  & 0 \\ 0 & 0 \\ 0 &  0 \ebm, \,
V_2  =
\bbm \sqrt{\eps} & 0 \\ \sqrt{4\eps}^{-1} & 0 \\ 0 & 0 \ebm, \,
V_3  = \bbm \frac{1}{ \sqrt{2} }  & 0 \\ 0 & 0 \\ 0 &  0 \ebm.
\]
\end{exm}

In Theorem~\ref{thm:dropSatz},  if $\QM_t(G)$ is not archimedean,
its conclusion might not hold. The following is such an example.

\begin{exm}  \label{exm:3.6}
Consider the matrix polynomial
\[
G(\x,\y)=\bbm \y^2(1-\x^2)-1 & 0 \\ 0 & \ 2-\x^2 \ebm.
\]
Clearly, $P_G=(-1,1)$ is bounded. The intersection
$\QM_1(G)\cap \R\cx$ is archimedean, since
$2-x^2\in \QM_1(G)\cap \R\cx$.
However, the quadratic module $\QM_1(G)$
itself is not archimedean, because $S_G$ is unbounded.
We claim that $\QM_1(G)\cap \R\cx$
is generated by the polynomial $2-\x^2$.
For every $g(\x) \in \QM_1(G)\cap \R\cx$, we can write
\be\label{eq:sigmay}
g(\x) =\sigma_0+\sigma_1 \cdot (\y^2(1-\x^2)-1)+\sigma_2 \cdot (2-\x^2)
\ee
for sos polynomials $\sigma_j\in\R[\x,\y]$.
%
%
Note that $g(\x)$ does not depend on $\y$.
To cancel $\y$ on the right hand side of
\eqref{eq:sigmay},
we must have $\sig_1 = 0$. Similarly, $\sig_0$ and $\sig_2$
 cannot depend on $\y$.
We can conclude that $g\in\QM_1(2-\x^2)\subseteq\R\cx$.
Finally, for each $\lambda\in(1,2)$, the polynomial
$\lambda-\x^2$ is positive on $P_G$, but it does not belong to
$\QM_1(G)\cap \R\cx$. The conclusion of Theorem \ref{thm:dropSatz}
fails for this example, because $\QM_1(G)$ is not archimedean.
\end{exm}

\begin{exm}
Consider the matrix polynomials
\[
G(\x,\y) =  \bbm  \x_1 & \y  & \x_1 \\  \y & \x_2 &  \x_2 \\ \x_1 & \x_2 &  1 \ebm,
\quad
Q(\x,\z) = \bbm \x_1+2\,\z_1-\z_1^2 &  \z_1\z_2 &  \x_2 \\
\z_1\z_2 & \x_2+2\,\z_2-\z_2^2 & \x_1 \\
 \x_2 & \x_1  &  1 \ebm.
\]
Note that $P_G=[0,1]^2$ and $\QM_3(G)$
is archimedean, because
\[
1-\x_1^2= \bbm 1 \\ 0 \\ -1 \ebm G \bbm 1 \\ 0 \\ -1 \ebm +
\bbm 1 \\ 0 \\ -\x_1 \ebm G \bbm 1 \\ 0 \\ -\x_1 \ebm,
\]
\[
1-\x_2^2= \bbm 0 \\ 1 \\ -1 \ebm G \bbm 0 \\ 1 \\ -1 \ebm +
\bbm 0  \\ 1  \\ -\x_2 \ebm G \bbm 0 \\ 1 \\ -\x_2 \ebm.
\]
As in Example \ref{ex:3.5} this also yields $1-\x_i\in \QM_3(G)$. Hence
\[
2-\y^2 = (1-\x_2) + (1-\x_1)\y^2 +
\frac12 \bbm 1\\ -\y \\ 1\ebm^T G \bbm 1\\ -\y \\ 1\ebm
+\frac12 \bbm 1\\ -\y \\ -1\ebm^T G \bbm 1\\ -\y \\ -1\ebm\in\QM_3(G).
\]
The matrix polynomial  $Q(\x,\z)$ is sos-concave in $\z$.
The polynomials $p_i$ in Theorem~\ref{thm:Pos:Qsoscvz}
can be chosen as
\[
p_\af = \x_2^{\af_1} \x_1^{\af_2}, \quad
 \af =(\af_1, \af_2) \in \N^2_2 .
\]
Clearly,
\[
M(p)  = \bbm 1 & \x_2 &  \x_1 \\ \x_2  & \x_1^2 & \x_2\x_1
\\ \x_1 & \x_1\x_2 & \x_1^2 \ebm
= \bbm 1 \\ \x_2 \\  \x_1 \ebm \bbm 1 \\ \x_2 \\ \x_1 \ebm^T
\]
is sos. We have
\[
Q(\x,p(\x)) = \bbm \x_2 \\ \x_1 \\ 1 \ebm\bbm \x_2 \\ \x_1 \\ 1 \ebm^T +
\bbm \x_1+2(\x_2-\x_2^2) & 0 & 0 \\ 0 & \x_2 + 2(\x_1-\x_1^2) & 0 \\ 0 & 0 & 0 \ebm.
\]
Observe that
\[
\x_1 = e_1^TG(\x,\y)e_1, \quad  \x_2 = e_2^TG(\x,\y)e_2,
\]
\[
\x_1-\x_1^2 = \bbm 1 \\ 0 \\ -\x_1 \ebm^T G(\x,\y) \bbm 1 \\ 0 \\ -\x_1 \ebm, \quad
\x_2-\x_2^2 = \bbm 0 \\ 2\\ -\x_2 \ebm^T G(\x,\y) \bbm 0 \\ 2\\ -\x_2 \ebm.
\]
A certificate of the form
\reff{pafP(x):in:G} for $P_G \subseteq P_Q$ is that
\[
\ell = 4, \quad
V_0(\x,\y) = \bbm \x_2 & \x_1 & 1 \ebm,
\]
\[
V_1  = \bbm 1 & 0 & 0 \\ 0 & 0  & 0 \\ 0 & 0 & 0 \ebm, \,
V_2  = \bbm 1 & 0 & 0 \\ 0 & 0  & 0 \\ -\x_1 & 0  & 0\ebm, \,
V_3  = \bbm 0 & 0 & 0 \\ 0 & 1  & 0 \\ 0 & 0 & 0 \ebm,\,
V_4  = \bbm  0 & 0 &  0\\  0 & 2 &  0\\ 0 & -\x_2  & 0 \ebm.
\]
\end{exm}

\section{Containment of Spectrahedrops}
\label{sc:conDrop}

In this section, we show how to apply the lifted matrix Positivstellensatz
developed in Section~\ref{sc:LMPosz} to check the containment of
spectrahedrops. Recall that a spectrahedrop is
the projection of a spectrahedron.
Convex semialgebraic sets are often spectrahedrops \cite{Las,HN,Sce1},
although not all of them are \cite{Sce2}.

Consider two spectrahedrops
\[
P_A :=\{x:  \exists y ,\, A(x, y)  \succeq 0  \}, \quad
P_B := \{x: \exists z ,\, B(x,z) \succeq 0 \},
\]
where $A(\x,\y) \in \mc{S}\R[\x,\y]^{k\times k}$,
$B(\x,\z) \in \mc{S}\R[\x,\z]^{t\times t}$ are linear pencils
as in \eqref{eq:A B}.
An important question of wide applications is how to
check the containment $P_A \subseteq  P_B$?
When $P_A, P_B$ are spectrahedra (i.e.,
there are no lifting variables $\y,\z$),
there exist Positivstellens\"atze 
certifying the containment~\cite{HKM13,KTT13,KTT15}.
In this section, we present a certificate for the containment
when there are lifting variables $\y,\z$.
Theorem~\ref{thm:dropSatz} can be applied.
In fact, when the included set is a spectrahedrop,
the assumptions in Theorem~\ref{thm:dropSatz} can be weakened.
Recall that the intersection
$\QM_t(A)\cap\mc{S}\R[\x]^{t\times t}$
is archimedean if there exists $f(\x) \in \QM_t(A)\cap\mc{S}\R[\x]^{t\times t}$
such that $f(x) \succeq 0$ defines a compact set in $\re^n$.
The archimedeanness of $\QM_t(A)\cap\mc{S}\R[\x]^{t\times t}$
implies the boundedness, but not the closedness, of $P_A$.
Clearly, the archimedeanness of $\QM_t(A)$ implies that
$\QM_t(A)\cap\mc{S}\R[\x]^{t\times t}$ is archimedean
and $P_A$ is closed, but not vice versa;
cf. Example~\ref{exm:3.6}.

\begin{theorem}\label{thm:dropDrop}
Let $A(\x,\y)$ and $B(\x,\z)$ be linear pencils as in \eqref{eq:A B}.
Assume that $\QM_t(A)\cap\mc{S}\R[\x]^{t\times t}$
is archimedean. If there is $\eps>0$ such that
for each $x \in P_A$ there exists $z$ with $B(x,z) \succeq\eps I$,
then there exists a tuple $f(\x):=(f_1(\x), \ldots, f_s(\x))$
of polynomials in $\rx$ such that
\be \label{Bx:fxz:QA2}
B(\x,f(\x)) =
B_0 + \sum_{i=1}^n \x_i B_i  +  \sum_{j=1}^s f_j(\x) B_{n+j}
\, \in \, \QM_t( A(\x,\y)  ).
\ee
\end{theorem}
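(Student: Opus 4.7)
The plan is to mirror the proof of Theorem~\ref{thm:dropSatz}, replacing the use of a compact $S_A$ (unavailable under the weaker hypothesis) with that of the compact $\overline{P_A}$, and replacing the final appeal to the classical matrix Positivstellensatz with a linear-pencil variant suited to the intersection $\tilde M := \QM_t(A) \cap \mc{S}\R\cx^{t\times t}$.

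First, I would argue that $\overline{P_A}$ is compact. By archimedeanness of $\tilde M$ there exists $h(\x) \in \tilde M$ with $\{x \in \R^n : h(x) \succeq 0\}$ compact. Elements of $\tilde M$ are $\y$-independent members of $\QM_t(A)$: evaluating their $\QM_t(A)$-representation at any $(x,y) \in S_A$ gives a sum of positive semidefinite matrices, so $h(x) \succeq 0$ for every $x \in P_A$. Hence $P_A$ is bounded and $\overline{P_A}$ is compact.

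Next, I would construct a continuous lift $\phi : \overline{P_A} \to \R^s$ with $B(x,\phi(x)) \succ 0$. Following Theorem~\ref{thm:dropSatz}, set $\phi(x) := \arg\min\{z^Tz : B(x,z) \succeq (\eps/2) I\}$, which is well-defined and continuous on $P_A$ by strict convexity and coercivity of the objective. Since $P_A$ need not be closed, I would verify that $\phi$ extends continuously to $\overline{P_A}$: the slack between $\eps I$ (in the hypothesis) and $(\eps/2) I$ (in the definition of $\phi$), together with linearity of $B$ in $(\x,\z)$, should force $\phi$ to be bounded on $P_A$ and thus extend by convergent-subsequence arguments at boundary points. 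The Stone-Weierstra\ss{} theorem on compact $\overline{P_A}$ then produces a polynomial tuple $f(\x) = (f_1(\x), \ldots, f_s(\x))$ with $B(\x, f(\x)) \succ 0$ on $\overline{P_A}$.

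Finally, I would invoke a matrix Positivstellensatz for linear pencils to conclude $B(\x, f(\x)) \in \QM_t(A)$. This is the main obstacle, since Theorem~\ref{thm:pos} does not apply directly: $\QM_t(A)$ itself need not be archimedean. The needed linear-pencil variant relies on the fact that for linear $A$ the algebraic positivity set of $\tilde M$, as a quadratic module in $\R\cx^{t\times t}$, coincides with $\overline{P_A}$ --- a property that can be established via hyperplane separation on the convex compact set $\overline{P_A}$, and which fails in the non-linear setting of Example~\ref{exm:3.6} (which is precisely why Theorem~\ref{thm:dropSatz} needed the stronger archimedean hypothesis on the full quadratic module). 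Once this variant is available, positivity of $B(\x, f(\x))$ on $\overline{P_A}$ places it in $\tilde M \subseteq \QM_t(A)$.
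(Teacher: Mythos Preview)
Your proposal is correct and follows the same route as the paper: establish that the positivity set of $\tilde M$ equals $\overline{P_A}$, produce a polynomial lift $f$ with $B(\x,f(\x))\succ0$ on $\overline{P_A}$ via Stone--Weierstra\ss, then apply the matrix Positivstellensatz to the archimedean module $\tilde M\subseteq\QM_t(A)$. The one ingredient you leave implicit is that after hyperplane separation one must still place the separating linear polynomial $\ell$ into $\QM_1(A)\cap\R\cx$; the paper does this by invoking the linear Positivstellensatz for linear pencils \cite[Corollary~4.2.4]{KSmor}, and this---rather than the separation itself---is precisely where linearity of $A$ enters and what fails in Example~\ref{exm:3.6}.
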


\begin{proof}
For brevity, let us write $M:=\QM_t(A)\cap\mc{S}\R[\x]^{t\times t}$.
We claim that the positivity set of $M$,
\[
S_M:=\{x\in\R^n\colon \forall g\in M,\, g(x)\succeq0\}
\]
equals the closure $\overline{P_A}$.
The inclusion $\overline{P_A}\subseteq S_M$ is clear. For the converse, assume
$u \in S_M\setminus\overline{P_A}.$ Since $\overline{P_A}$ is convex,
there is a linear polynomial $\ell(\x)$
satisfying $\ell(\x)\geq\af>0$ on $\overline{P_A}$
for some $\af$, and $\ell(u)<0$.  In particular, $\ell(\x)\geq\af>0$
on $S_A$. So, by the linear Positivstellensatz \cite[Corollary 4.2.4]{KSmor},
$\ell(\x)\in \QM_1(A)\cap\R\cx$. This implies that
$\ell(\x) I \in M$,  leading to
the contradiction $\ell(u)\geq0$.

The rest of the proof is the same as for Theorem \ref{thm:dropSatz}.
We can continuously choose for each $x\in P_A$ a point
$z=z(x)\in\R^s$ 
satisfying
$B(x,z)\succeq\frac{\eps}2 I$. By the Stone-Weierstra\ss{} theorem,
there is a tuple of polynomials
$f(\x) := ( f_1(\x),\ldots,f_s(\x) )$ such that
$B(\x,f(\x) )\succ0$ on $\overline{P_A}=S_M$.
Since $M$ is archimedean,
the matrix Positivstellensatz (see e.g.~\cite{KleSch10})
implies $B(\x,f (\x))\in\QM_t(A)$, as desired.
\end{proof}

In Theorem~\ref{thm:dropDrop}, we assume the existence of
a uniform $\eps>0$ such that
for all $x \in P_A$ there exists $z$ with $B(x,z) \succeq\eps I$.
This is inconvenient to check in applications.
However, the condition can be weakened to
$B(x,z)\succ0$
when $P_A$ is closed.

\begin{corollary}\label{cor:dropDrop1}
Let $A(\x,\y)$ and $B(\x,\z)$ be linear pencils as in \eqref{eq:A B}.
Assume that
$\QM_t(A)\cap\mc{S}\R[\x]^{t\times t}$
is archimedean and $P_A$ is closed.
If for each $x \in P_A$ there exists $z$ with $B(x,z) \succ0$,
then there exist a tuple $f(\x):=(f_1(\x), \ldots, f_s(\x))$
of polynomials in $\rx $ such that
\eqref{Bx:fxz:QA2} holds.
\end{corollary}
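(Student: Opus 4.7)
The plan is to deduce the corollary from Theorem~\ref{thm:dropDrop} by upgrading the pointwise positive-definiteness hypothesis $B(x,z)\succ 0$ to the uniform one $B(x,z)\succeq\eps I$ appearing in that theorem, using a compactness argument analogous to the opening step of the proof of Theorem~\ref{thm:dropSatz}.

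First I would establish compactness of $P_A$. By hypothesis, $M:=\QM_t(A)\cap\mc{S}\R[\x]^{t\times t}$ is archimedean, which means there exists some $f(\x)\in M$ whose positivity set $S_f\subseteq\R^n$ is compact. Since $P_A\subseteq S_M\subseteq S_f$, the set $P_A$ is bounded; combined with the standing assumption that $P_A$ is closed, this yields compactness of $P_A$. (Note that without the closedness hypothesis, archimedeanness of $M$ only gives boundedness, cf.~Example~\ref{exm:3.6}; this is precisely where the extra assumption is used.)

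Next I would manufacture the uniform $\eps$. For each $x\in P_A$, fix $z(x)\in\R^s$ with $B(x,z(x))\succ 0$. Since $B$ is continuous and positive definiteness is an open condition on symmetric matrices, there exist $\eps(x)>0$ and an open ball $\mc{B}(x,\delta(x))$ such that
\[
B(w,z(x))\succeq\eps(x)\,I \quad\text{for every } w\in\mc{B}(x,\delta(x)).
\]
The collection $\{\mc{B}(x,\delta(x))\}_{x\in P_A}$ is an open cover of the compact set $P_A$, so it admits a finite subcover $\{\mc{B}(x^i,\delta(x^i))\}_{i=1}^N$. Setting $\eps:=\min_{1\leq i\leq N}\eps(x^i)>0$, any point $x\in P_A$ lies in some $\mc{B}(x^i,\delta(x^i))$, and the choice $z:=z(x^i)$ satisfies $B(x,z)\succeq\eps\,I$.

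Finally, the uniform condition just established is exactly the hypothesis of Theorem~\ref{thm:dropDrop}, which then produces the desired tuple $f(\x)=(f_1(\x),\ldots,f_s(\x))$ of polynomials with $B(\x,f(\x))\in\QM_t(A)$. There is no real obstacle in this argument: the whole point of the corollary is to package a routine compactness reduction so that applications can verify the simpler pointwise condition $B(x,z)\succ 0$ instead of searching for a quantitative $\eps$.
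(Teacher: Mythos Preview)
Your proof is correct and follows essentially the same approach as the paper's: establish that $P_A$ is compact from archimedeanness plus closedness, then use a finite-subcover argument (exactly as in the proof of Theorem~\ref{thm:dropSatz}) to obtain a uniform $\eps>0$, and finally invoke Theorem~\ref{thm:dropDrop}. The paper's version is terser but structurally identical.
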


\begin{proof}
If $\QM_t(A)\cap\mc{S}\R[\x,\z]^{t\times t}$ is archimedean,
then $P_A$ is bounded. Hence, $P_A$ is compact since it is also closed.
An $\eps>0$ satisfying Theorem~\ref{thm:dropDrop}
can be found similarly as in the proof of
Theorem~\ref{thm:dropSatz}. Therefore,
the  corollary follows from Theorem \ref{thm:dropDrop}.
\end{proof}

Clearly, \reff{Bx:fxz:QA2} implies that $P_A \subseteq P_B$.
Theorem~\ref{thm:dropDrop} essentially says that
\reff{Bx:fxz:QA2} is a necessary certificate
when $P_A$ is contained in the interior of $P_B$,
i.e., $P_A \subseteq \mbox{int}(P_B)$.
Note that in \reff{Bx:fxz:QA2} the polynomials
$f_i$ only depend on $\x$.

\begin{exm}
Consider the linear pencils
\[
\begin{split}
A(\x,\y) &:= \diag\Big(
\bbm  \y_1  & \x_1 \\ \x_1 & 1 \ebm,
\bbm  \y_2 & \x_2 \\ \x_2 & 1 \ebm,
\bbm  1+\y_1 & \y_2 \\ \y_2 & 1-\y_1 \ebm
\Big),
\\
B(\x,\y)& := \bbm  1 & \x_1 & \z \\ \x_1 &  1  & \x_2  \\ \z  &  \x_2  & 1 \ebm.
\end{split}
\]
The spectrahedrop $P_A$ is the unit $4$-norm ball $\{x_1^4+x_2^4 \leq 1\}$,
while $P_B$ is the unit square $[-1,1]^2$.
Clearly, $P_A \subseteq P_B$.
We give a certificate of the form \reff{Bx:fxz:QA2} for this inclusion.
The polynomial $f_1$ in Theorem~\ref{thm:dropDrop}
can be chosen as $\x_1\x_2$. Note that
\[
B(\x,f(\x)) =
\bbm  \x_1 \\ 1 \\  \x_2  \ebm   \bbm  \x_1 \\ 1 \\  \x_2  \ebm^T +
\bbm  1-\x_1^2 &   &   \\  &  0  &    \\    &    & 1-\x_2^2 \ebm,
\]
\[
\begin{split}
1-\x_1^2 &=
\bbm 1 \\ -\x_1 \ebm^T\bbm  \y_1  & \x_1 \\ \x_1 & 1 \ebm \bbm 1 \\ -\x_1 \ebm +
\bbm 0 \\ 1 \ebm^T\bbm  1+\y_1 & \y_2 \\ \y_2 & 1-\y_1 \ebm \bbm 0 \\ 1 \ebm,\\
1-\x_2^2 &=
\bbm 1 \\ -\x_2 \ebm^T\bbm  \y_2 & \x_2 \\ \x_2 & 1 \ebm \bbm 1 \\ -\x_2 \ebm +
\half \bbm 1 \\ -1 \ebm^T\bbm  1+\y_1 & \y_2 \\ \y_2 & 1-\y_1 \ebm \bbm 1 \\ -1 \ebm.
\end{split}
\]
The certificate for the inclusion $P_A \subseteq P_B$
of the form~\reff{B(x,y):Postz:A(x,z)},
or equivalently \reff{Bx:fxz:QA2}, is
\[
\baray{r}
B(\x,f(\x)) =
V_0(\x)^TV_0(\x) +
 V_1(\x)^TA(\x,\y)V_1(\x) +   V_2(\x)^TA(\x,\y)V_2(\x),
\earay
\]
where the matrix polynomials $V_i(\x)$ are:
\[
\begin{split}
V_0(\x) &= \bbm  \x_1 & 1  & \x_2  \ebm,
\\
V_1(\x) &=\bbm 1 &  -\x_1 & 0 & 0 & 0 & 1 \ebm^T
\bbm 1 & 0 &  0 \ebm,
\\
V_2(\x)& =\bbm 0  & 0  & 1 &  -\x_2 & \frac{1}{\sqrt{2}} & \frac{-1}{\sqrt{2}} \ebm^T
\bbm 0 & 0 &  1 \ebm.
\end{split}
\]
\end{exm}

\begin{exm}
[\protect{\cite[Example 4.6.3]{KSmor}}]   \label{exm:4.4}
In this example, we show that the polynomials
$V_j$ in the Positivstellensatz
certificate \eqref{Bx:fxz:QA2} 
%
%
might depend on $\y$.
This is the case even if there is no lifting variable $\z$.
Consider ($n=1$)
\[
A(\x,\y) := \bbm 0 & \x & 0\\ \x & \y_1 & \y_2\\
0 &\y_2 & \x
\ebm.
\]
Clearly, $P_A=\{0\}$.
We claim that $\QM_1(A)\cap\R\cx$  is archimedean.
Obviously, $e_3^TAe_3=\x \in \QM_1(A)$. Further,
\be\label{eq:x^2}
-\x^2=uAu^T\in\QM_1(A)
\ee
for $u=\bbm \frac12+\frac12\y_1 & -\x & 0\ebm$.
Hence for each $\la>0$,
\[
1-\la \x = \left( 1-\frac{\la}2\x \right)^2-\la^2 \x^2 \in \QM_1(A).
\]
In particular, the assumptions of Theorem \ref{thm:dropDrop}
or Corollary \ref{cor:dropDrop1} are met.
However, a certificate of the form
\[
-\x^2= \sum_{i} V_{0i}^T V_{0i}+\sum_j
V_j^T A(\x, \y)  V_j \in  \QM_1(A)
\]
cannot exist for $V_{0i}, V_j \in \R\cx^3$. Indeed, if
$u= \bbm u_1&u_2&u_3 \ebm^T \in \R\cx^3$, then
\be\label{eq:uAu}
u^TAu =  2 u_1 u_2 \x + u_3^2 \x + u_2^2 \y_1
+2 u_2 u_3 \y_2.
\ee
In a sum of terms of the form \eqref{eq:uAu}, one can eliminate $\y_i$ only if all $u_2=0$.
But, for $u_2=0$, plugging in $\x=1$
leads to the contradiction $-1\geq0$.
\end{exm}

\subsection{Matrix cube problem for spectrahedrops}
We conclude this section with an application
to the matrix cube problem.
As explained by Ben-Tal and Nemirovski~\cite{BTN,Nem},
an important problem in convex geometry and optimization is
to find the largest cube that is contained in a spectrahedrop.
Consider the linear pencil:
\be\label{eq:B}
B(\x,\z):= B_0 + \x_1 B_1 + \cdots + \x_n B_n
+ \z_1 B_{n+1} + \cdots + \z_s B_{n+s}.
\ee
The matrix cube problem is  the optimization problem
\be \label{max:rho:cube}
\max \quad \rho \quad
\mbox{s.t.} \quad  [-\rho, \rho]^n \subseteq P_B.
\ee
When $0$ is in the interior of $P_B$,
we can generally assume $B_0\succeq 0$.
Note that $[-\rho, \rho]^n \subseteq P_B$ if and only if
$[-1, 1]^n \subseteq P_{\widetilde{B}}$ with
\[
\widetilde{B} \, := \,
\frac{1}{\rho}  B_0 + \x_1 B_1 + \cdots + \x_n B_n
+ \z_1 B_{n+1}1 + \cdots + \z_s B_{n+s}.
\]
Thus, \reff{max:rho:cube} is in turn equivalent to
\be \label{min:t:PoszV(x)}
\left\{ \baray{rl}
\min &  \gm \\
\mbox{s.t.} &   \gm B_0 + {\sum}_{i=1}^n \x_i B_i +
{\sum}_{j=1}^s p_j(\x) B_{n+j} \in \QM_t(D), \\
& \gm \geq 0,
\earay \right.
\ee
for scalar polynomials $p_j(\x)$. In the above, $D(\x)$ is the diagonal matrix
\[
D(\x) = \diag( [1+\x_1 \quad 1-\x_1 \quad \cdots \quad 1+\x_n \quad 1-\x_n ] ).
\]
One can solve \reff{min:t:PoszV(x)}
as a semidefinite program, when the degrees of
the $p_j$ are chosen and a truncation of
$\QM_t(D)$ is used.

\begin{exm}
Consider the spectrahedrop $P_B$ given by the linear pencil
\[
B(\x,\z) = \bbm
   1 & \x_1 & \z_1  &  \z_3 \\
\x_1  &  1  & \x_2 &   \z_2  \\
\z_1  &  \x_2  & 1  & \x_3 \\
\z_3  &  \z_2  &  \x_3  & 1
\ebm.
\]
We want to find the largest square contained in $P_B$,
with a certificate for the inclusion.
The positive semidefiniteness of $B(x,z)$
implies that $|x_1|, |x_2|, |x_3| \leq 1$,
so $P_B$ is contained in the unit cube $[-1,1]^3$.
By solving the optimization problem \reff{min:t:PoszV(x)},
we certify that $[-1,1]^3$ is also the largest cube
contained in $P_B$. The optimal value of $\gm$
in \reff{min:t:PoszV(x)} is $1$.
The optimal $p_j$ are given as
\[
p_1 = \x_1\x_2, \quad p_2 = \x_2\x_3, \quad p_3 = \x_1\x_2\x_3.
\]
The certificate for the inclusion $P_B \subseteq [-1,1]^3$ is then
\[
B(\x,p(\x)) = V_0(\x)^TV_0(\x) + \sum_{k=1}^6 V_k(\x)^T D(\x) V_k(\x),
\]
where the $V_i(\x)$ are
\[
\begin{split}
V_0(\x) &= \bbm  1  & \x_1  & \x_1\x_2 & \x_1\x_2\x_3 \ebm,
\\
V_1(\x) &= \bbm 1 & 0 & 0 & 0 & 0 & 0 \ebm^T
\left( \frac{1-\x_1}{\sqrt{2}} \right)
\bbm 0 & 1 & \x_2 & \x_2\x_3 \ebm,
\\
V_2(\x) &= \bbm 0 & 1 & 0 & 0 & 0 & 0 \ebm^T
\left( \frac{1+\x_1}{\sqrt{2}} \right)
\bbm 0 & 1 & \x_2 & \x_2\x_3 \ebm,
\\
V_3(\x) &= \bbm 0 & 0 & 1 & 0 & 0 & 0 \ebm^T
\left( \frac{1-\x_2}{\sqrt{2}} \right)
\bbm 0 & 0 & 1 & \x_3 \ebm,
\end{split}
\]
\[
\begin{split}
V_4(\x) &= \bbm 0 & 0 & 0 & 1 & 0 & 0 \ebm^T
\left( \frac{1+\x_2}{\sqrt{2}} \right)
\bbm 0 & 0 & 1 & \x_3 \ebm,
\\
V_5(\x) &= \bbm 0 & 0 & 0 & 0 & 1 & 0 \ebm^T
\left( \frac{1-\x_3}{\sqrt{2}} \right)
\bbm 0 & 0 & 0 &  1\ebm,
\\
V_6(\x) &= \bbm 0 & 0 & 0 & 0 & 0 & 1 \ebm^T
\left( \frac{1+\x_3}{\sqrt{2}} \right)
\bbm 0 & 0 & 0 & 1 \ebm.
\end{split}
\]
We deduce that $P_B=[-1,1]^3$.
\end{exm}

\section{Conclusions and discussion}
\label{sc:con}

In this paper, we have proposed a new matrix Positivstellensatz
that uses lifting polynomials.
It serves as a certificate for containment
between projections of two sets defined by polynomial matrix inequalities.
The main feature is that the lifting variables
can be parameterized by polynomials.
Such polynomials are called lifting polynomials.
A typical application of this lifted Positivstellensatz
is to certify that a spectrahedrop (i.e., projection of a spectrahedron)
is contained in another spectrahedrop.
Under some mild natural assumptions, we have shown that
the proposed lifted matrix Positivstellensatz
is a sufficient and necessary certificate for the containment.
The 
certificate can be searched for
by solving a semidefinite program.

\subsection{The case of scalar polynomials}

Theorems~\ref{thm:dropSatz} and \ref{thm:Pos:Qsoscvz}
also apply to projections of semialgebraic sets defined by
scalar polynomials. We thus obtain a large class of
Positivstellens\"atze for projections of semialgebraic sets.

%
%
Let $g_1(\x,\y), \ldots, g_k(\x,\y)$
and $q_1(\x,\z), \ldots, q_t(\x, \z)$
be scalar polynomials. They give semialgebraic sets
\be\label{eq:K1K2}
\begin{split}
K_1 & = \{ x\in \re^n: \exists y\in\R^r,\, g_1(x,y) \geq 0,
\ldots, g_k(x,y) \geq 0 \},
\\
K_2 &= \{ x\in \re^n: \exists z\in\R^s,\, q_1(x,z) \geq 0,
\ldots, q_t(x,z) \geq 0 \}.
\end{split}
\ee
We can get a Positivstellensatz certificate
for the containment $K_1 \subseteq K_2$.

\begin{cor}  \label{scalar:Posatz}
Let $g_1, \ldots, g_k \in \R[\x,\y]$
and $q_1, \ldots, q_t \in \R[\x, \z]$
be scalar polynomials,
and let $K_1,K_2$ be as in \eqref{eq:K1K2}.
Assume the quadratic module of
$(g_1,\ldots, g_k)$ is archimedean
and the degrees of $q_j$ in $z$ are at most $2d$.
If for every $x \in K_1$, each $q_j(x,\z)$ is sos-concave in $\z$
and there exists $z$ such that $q_j(x,z) > 0$,
then there exist polynomials
$ p_\af \in \R[\x]$ ($\af \in \N^s_{2d}$)
and sos polynomials $\sig_{ij} \in \R[\x,\y]$
such that $M(p)$ is an sos matrix polynomial and
for each $j=1,\ldots,t$,
\be \label{qj:sos:lift}
q_j(\x,p(\x))
 =
 \sig_{j0}(\x,\y)
 +
 \sum_{i=1}^k g_i(\x,\y) \sig_{ij}(\x,\y) .
\ee
\end{cor}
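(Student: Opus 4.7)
The plan is to reduce the scalar statement to the already-established matrix version, Theorem~\ref{thm:Pos:Qsoscvz}, by packaging the scalar data diagonally. Set
\[
G(\x,\y):=\diag\bigl(g_1,\ldots,g_k\bigr)\in\mc{S}\R[\x,\y]^{k\times k},\qquad
Q(\x,\z):=\diag\bigl(q_1,\ldots,q_t\bigr)\in\mc{S}\R[\x,\z]^{t\times t}.
\]
A diagonal symmetric matrix is positive (semi)definite iff each diagonal entry is positive (resp.\ nonnegative), so $P_G=K_1$ and $P_Q=K_2$. Expanding $q_j(\x,\z)=\sum_{\af\in\N^s_{2d}}\z^\af q_{j,\af}(\x)$ puts $Q$ in the form \eqref{Qxz:nlz} with $Q_\af=\diag\bigl(q_{1,\af},\ldots,q_{t,\af}\bigr)$.

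Next I would verify the hypotheses of Theorem~\ref{thm:Pos:Qsoscvz} for this $G$ and $Q$. Archimedeanness transfers: any witness $f$ in the scalar quadratic module of $(g_1,\ldots,g_k)$ with $\{f\ge0\}$ compact yields $f I_t\in\QM_t(G)$ with the same compact positivity set. For any $\xi\in\R^t$, the polynomial $\xi^T Q(x,\z)\xi=\sum_j\xi_j^2 q_j(x,\z)$ is a nonnegative combination of sos-concave scalar polynomials in $\z$, so its Hessian is a nonnegative combination of sos matrix polynomials, and hence sos; thus $Q(x,\z)$ is sos-concave in $\z$ at every $x\in K_1$. The positivity assumption, read (as in the matrix analogue) as the existence of a single $z$ with $q_j(x,z)>0$ for all $j$, is precisely $Q(x,z)\succ 0$.

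Applying Theorem~\ref{thm:Pos:Qsoscvz} then produces polynomials $p_\af\in\R[\x]$ with $M(p)$ an sos matrix polynomial together with matrix polynomials $V_0,V_1,\ldots,V_\ell$ (the $V_\nu$ for $\nu\ge1$ of size $k\times t$) satisfying $Q(\x,p(\x))=V_0^T V_0+\sum_{\nu=1}^\ell V_\nu^T G V_\nu$. Reading off the $(j,j)$-entry on both sides and using that $G$ is diagonal gives
\[
q_j(\x,p(\x))=\sum_{a} (V_0)_{aj}^2+\sum_{i=1}^k g_i\sum_{\nu=1}^\ell (V_\nu)_{ij}^2,
\]
which is exactly \eqref{qj:sos:lift} with the sos weights $\sig_{j0}:=\sum_a (V_0)_{aj}^2$ and $\sig_{ij}:=\sum_\nu (V_\nu)_{ij}^2$. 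The argument is essentially a translation, so no genuinely new obstacle appears beyond Theorem~\ref{thm:Pos:Qsoscvz}; the only point requiring any care is reading the positivity hypothesis as providing a common witness $z$ for all $j$, which is what the diagonal packaging into $Q$ forces.
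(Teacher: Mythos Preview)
Your proposal is correct and follows essentially the same approach as the paper: package the scalar data diagonally into $G=\diag(g_1,\ldots,g_k)$ and $Q=\diag(q_1,\ldots,q_t)$, apply Theorem~\ref{thm:Pos:Qsoscvz}, and read off the diagonal entries of the resulting identity. You are more explicit than the paper in verifying the hypotheses (archimedeanness, sos-concavity, and the common-witness reading of the positivity assumption), but the argument is the same.
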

\begin{proof}
By Theorem~\ref{thm:Pos:Qsoscvz},
there exists a polynomial tuple $p=(p_1,\ldots,p_s)\in\R\cx^s$
and matrix polynomials $V_i(\x,\y)$ such that
\[
\begin{split}
 \diag \big( q_1(\x, p(\x) ), \ldots, q_t(\x, p(\x) ) \big) &= \\
 \sum_i  V_i(\x,\y)^T \diag \big( g_1(\x,\y), \ldots, g_k(\x,\y) \big)  V_i(\x,\y)
&+ V_0(\x,\y)^T V_0(\x,\y).
\end{split}
\]
Comparing diagonal entries, we  see that \reff{qj:sos:lift}
holds for some sos polynomials $\sig_{ij}(\x,\y)$.
\end{proof}

\subsection{Some open questions}

In future research, the following interesting and important  questions
should be addressed.
They are mostly open to the authors.

\begin{ques}
In the certificates \reff{B(x,y):Postz:A(x,z)},
\reff{Pp(x):in:Q(G)}, or \reff{pafP(x):in:G},
for what kinds of matrix polynomials $G(\x,\y)$ and $Q(\x,\z)$,
can we choose the polynomials $V_j$ to be independent of $\y$?
\end{ques}

The above question is of great interest in computation.
If each $V_j$ is independent of $\y$, the semidefinite programs
searching for \reff{B(x,y):Postz:A(x,z)},
\reff{Pp(x):in:Q(G)}, or \reff{pafP(x):in:G}
become much easier to solve.
In Example~\ref{exm:4.4}, the polynomials $V_j$ must depend on $\y$.
However, in all the other examples,
we can choose $V_j$ to be independent of $\y$.

Convexity is used in a key step in the proofs 
of Theorems \ref{thm:dropSatz} and \ref{thm:Pos:Qsoscvz} to obtain a 
lifting polynomial
map $P_G\to S_Q$. 
 When $Q(\x,\z)$ is not convex in $\z$, the lifting polynomials might not exist, 
 cf.~Example \ref{ex:nolift}. 
 This leads to the following problem:

\begin{ques}
In Theorem~\ref{thm:Pos:Qsoscvz},
when $Q(\x,\z)$ is not sos-concave in $\z$,
what is an appropriate certificate for ensuring
$P_G \subseteq P_Q$?
\end{ques}

This question is generally very challenging.
Indeed,
for a given $x$, checking the existence of
$z$ satisfying $Q(x,z) \succeq 0$ is already difficult.
This requires solving a polynomial matrix inequality,
which is computationally very demanding.

\begin{ques}
For two linear pencils $A(\x,\y)$ and $B(\x,\z)$,
what is the appropriate certificate for $P_A = P_B$?
\end{ques}

The certificate \reff{Bx:fxz:QA2} ensures $P_A \subseteq P_B$.
To ensure $P_B \subseteq P_A$, one might
be tempted to apply a similar certificate again.
However, this usually does not work
because \reff{Bx:fxz:QA2} typically requires $P_A \subseteq  \mbox{int}(P_B)$.
To get a similar certificate for $P_B \subseteq P_A$,
one usually needs $P_B \subseteq  \mbox{int}(P_A)$.
Clearly, $P_A \subseteq  \mbox{int}(P_B)$
and $P_B \subseteq  \mbox{int}(P_A)$
generally do not hold simultaneously.

\linespread{.996}

\subsection*{Acknowledgments}
Igor Klep was partially supported by the Marsden Fund Council
of the Royal Society of New Zealand
and partially supported by
the Slovenian Research Agency grants  J1-8132, N1-0057 and P1-0222.
He would like to thank Christoph Hanselka for helpful discussions.
Jiawang Nie was partially supported by the NSF grants
DMS-1417985 and DMS-1619973.
The research was initiated when Jiawang Nie
visited the University of Auckland,
under the support of a Kalman visiting Fellowship.

\end{document}